\providecommand{\U}[1]{\protect\rule{.1in}{.1in}}
\theoremstyle{definition}
\newtheorem{theo}{Theorem}[section]
\newenvironment{theorem}[1][]
{\begin{theo}[#1]\begin{leftbar}}
{\end{leftbar}\end{theo}}
\newtheorem{lem}[theo]{Lemma}
\newenvironment{lemma}[1][]
{\begin{lem}[#1]\begin{leftbar}}
{\end{leftbar}\end{lem}}
\newtheorem{prop}[theo]{Proposition}
\newenvironment{proposition}[1][]
{\begin{prop}[#1]\begin{leftbar}}
{\end{leftbar}\end{prop}}
\newtheorem{defi}[theo]{Definition}
\newenvironment{definition}[1][]
{\begin{defi}[#1]\begin{leftbar}}
{\end{leftbar}\end{defi}}
\newtheorem{remk}[theo]{Remark}
\newenvironment{remark}[1][]
{\begin{remk}[#1]\begin{leftbar}}
{\end{leftbar}\end{remk}}
\newtheorem{coro}[theo]{Corollary}
\newtheorem{conv}[theo]{Convention}
\newtheorem{quest}[theo]{Question}
\newenvironment{question}[1][]
{\begin{quest}[#1]\begin{leftbar}}
{\end{leftbar}\end{quest}}
\newtheorem{warn}[theo]{Warning}
\newenvironment{warning}[1][]
{\begin{warn}[#1]\begin{leftbar}}
{\end{leftbar}\end{warn}}
\newtheorem{conj}[theo]{Conjecture}
\newtheorem{exam}[theo]{Example}
\newenvironment{example}[1][]
{\begin{exam}[#1]\begin{leftbar}}
{\end{leftbar}\end{exam}}
\newenvironment{statement}{\begin{quote}}{\end{quote}}
\let\sumnonlimits\sum
\let\prodnonlimits\prod
\let\cupnonlimits\bigcup
\let\capnonlimits\bigcap
\renewcommand{\sum}{\sumnonlimits\limits}
\renewcommand{\prod}{\prodnonlimits\limits}
\renewcommand{\bigcup}{\cupnonlimits\limits}
\renewcommand{\bigcap}{\capnonlimits\limits}
\newenvironment{verlong}{}{}
\newenvironment{vershort}{}{}
\newenvironment{noncompile}{}{}
\newcommand{\id}{\operatorname{id}}
\newcommand{\set}[1]{\left\{ #1 \right\}}
\newcommand{\tup}[1]{\left( #1 \right)}
\newcommand{\email}[1]{\href{mailto:#1}{\texttt{#1}}}
\begin{document}

\title{The Dowker theorem via discrete Morse theory}
\author{Morten Brun\footnote{Department of Mathematics, University of Bergen, Bergen, Norway. (\email{morten.brun@uib.no})},
Darij Grinberg\footnote{Department of Mathematics, Drexel University, Philadelphia, U.S.A. (\email{darijgrinberg@gmail.com})}}
\date{20 July 2024}
\maketitle
\begin{abstract}
\textbf{Abstract.}
The Dowker theorem is a classical result in the topology of finite spaces,
claiming that any binary relation between two finite spaces defines two
homotopy-equivalent complexes (the Dowker complexes).
Recently, Barmak strengthened this to a simple-homotopy-equivalence.
We reprove Barmak's result using a combinatorial argument that constructs
an explicit acyclic matching in the sense of discrete Morse theory.
\medskip\\
\textbf{Keywords:}
Dowker duality, simplicial complexes, discrete Morse theory,
combinatorial topology, finite topological spaces, acyclic matching.
\medskip\\
\textbf{Mathematics Subject Classification 2020:}
57Q10, 55-01, 55U10.
\end{abstract}

\section{Introduction}

The classical Dowker theorem (in the form given by Bj\"{o}rner
\cite[Theorem 10.9]{Bjorne95}) has recently been revisited by Brun and Salbu
\cite{BruSal22}, who reproved it using what they call the \textit{rectangle
complex}.

We give a simpler proof using what we call the \textit{biclique complex}. This
proof is purely combinatorial, as it relies on discrete Morse theory instead
of homotopy theory.

We briefly discuss the relation between the biclique and rectangle complexes.

\section{Notation}

We recall the notion of a simplicial complex:

\begin{definition}
Let $W$ be a finite set. Then, a \emph{simplicial complex} (or, for short,
\emph{complex}) with ground set $W$ is defined to be a set $\Delta$ of subsets
of $W$ that is closed under taking subsets (i.e., we have $G\in\Delta$
whenever $G$ is a subset of a set $F\in\Delta$). The elements of $\Delta$ are
called the \emph{faces} of $\Delta$.
\end{definition}

Note that (unlike some authors) we do not require that each $w\in W$ belongs
to some face of $\Delta$; thus, one and the same $\Delta$ can be a complex
with several ground sets. However, the properties of complexes that we will
study (homotopical and Morse-theoretical ones) do not depend on the ground set.

Any complex $\Delta$ is a set of sets, and thus is partially ordered by
inclusion; hence, it becomes a poset. We will use the notations $\succ$ and
$\prec$ for the cover relations of this poset. Thus, two faces $A$ and $B$
satisfy $A\prec B$ (or, equivalently, $B\succ A$) if and only if $A\subseteq
B$ and $\left\vert B\setminus A\right\vert =1$.

\section{The Dowker theorem}

Let $X$ and $Y$ be two finite sets. Let $R$ be a binary relation from $X$ to
$Y$, that is, a subset of $X\times Y$. Given two elements $x\in X$ and $y\in
Y$, we write $x\ R\ y$ for $\left(  x,y\right)  \in R$.

Given two subsets $U\subseteq X$ and $V\subseteq Y$, we write $U\ \mathbf{R}%
\ V$ if and only if all $u\in U$ and all $v\in V$ satisfy $u\ R\ v$ (that is,
if and only if $U\times V\subseteq R$). This defines a binary relation
$\mathbf{R}$ from the power set of $X$ to the power set of $Y$. Note that
$\varnothing\ \mathbf{R}\ V$ holds (for vacuous reasons) for each $V\subseteq
Y$, and likewise we have $U\ \mathbf{R}\ \varnothing$ for each $U\subseteq X$.

\begin{example}
\label{exa.dowker.1}Let $X=\left\{  1,2,3,4\right\}  $ and $Y=\left\{
5,6,7,8\right\}  $, and let $R$ be the \textquotedblleft
divides\textquotedblright\ relation (i.e., we set $x\ R\ y$ if and only if
$x\mid y$ in $\mathbb{Z}$). Then, $\left\{  1,2\right\}  \ \mathbf{R}%
\ \left\{  6,8\right\}  $ but not $\left\{  1,2,4\right\}  \ \mathbf{R}%
\ \left\{  6,8\right\}  $ (since $4\nmid6$).
\end{example}

A $Y$\emph{-neighbor} of a subset $U\subseteq X$ is defined to be an element
$y\in Y$ such that $U\ \mathbf{R}\ \left\{  y\right\}  $. In other words, a
$Y$-neighbor of a subset $U\subseteq X$ means a $y\in Y$ such that all $u\in
U$ satisfy $u\ R\ y$.

\begin{vershort}
A $Y$\emph{-conic set} shall mean a subset $U\subseteq X$ that is empty or has
a $Y$-neighbor. We define $C_{X}$ to be the set of all $Y$-conic sets. This
$C_{X}$ is a simplicial complex with ground set $X$ (since a subset of a
$Y$-conic set is again a $Y$-conic set).
\end{vershort}

\begin{verlong}
A $Y$\emph{-conic set} shall mean a subset $U\subseteq X$ that is empty or has
a $Y$-neighbor. We define $C_{X}$ to be the set of all $Y$-conic sets. This
$C_{X}$ is a simplicial complex with ground set $X$ (since a subset of a
$Y$-conic set is again a $Y$-conic set\footnote{\textit{Proof.} Let $U$ be a
$Y$-conic set. Let $U^{\prime}$ be a subset of $U$. We must prove that
$U^{\prime}$ is $Y$-conic.
\par
If $U^{\prime}$ is empty, then this is obvious (by the definition of
\textquotedblleft$Y$-conic\textquotedblright). Thus, assume WLOG that
$U^{\prime}$ is nonempty. Hence, $U$ is also nonempty (since $U^{\prime}$ is a
subset of $U$). Since $U$ is $Y$-conic, this entails that $U$ has a
$Y$-neighbor (by the definition of a $Y$-conic set). Let $y\in Y$ be this
$Y$-neighbor. Thus, all $u\in U$ satisfy $u\ R\ y$ (since $y$ is a
$Y$-neighbor of $U$). Hence, all $u\in U^{\prime}$ satisfy $u\ R\ y$ as well
(since all $u\in U^{\prime}$ satisfy $u\in U^{\prime}\subseteq U$ and
therefore $u\ R\ y$ (by the preceding sentence)). In other words, $y$ is a
$Y$-neighbor of $U^{\prime}$. Hence, the set $U^{\prime}$ has a $Y$-neighbor
(namely, $y$), and thus is $Y$-conic, qed.}).
\end{verlong}

Likewise we define a simplicial complex $C_{Y}$ with ground set $Y$: An
$X$\emph{-neighbor} of a subset $V\subseteq Y$ is defined to be an element
$x\in X$ such that $\left\{  x\right\}  \ \mathbf{R}\ V$. An $X$\emph{-conic
set} shall mean a subset $V\subseteq Y$ that is empty or has an $X$-neighbor.
We define $C_{Y}$ to be the set of all $X$-conic sets.

The complexes $C_{X}$ and $C_{Y}$ will be called the \emph{left Dowker
complex} and the \emph{right Dowker complex} of the relation $R$. They have
been called $K$ and $L$ in \cite[\S 1]{Dowker52}, have been called $D\left(
R\right)  $ and $D\left(  R^{T}\right)  $ in \cite{BruSal22}, and have been
called $\Delta_{0}$ and $\Delta_{1}$ in \cite[Theorem 10.9]{Bjorne95}.
(Some of these sources are slightly imprecise around the empty set, or use
a different notion of simplicial complexes that defines them to consist of
nonempty subsets. In substance, all definitions agree when the sets $X$
and $Y$ are nonempty; we believe that ours is best suited for the empty
case.)


\begin{example}
\label{exa.dowker.2}Let $X$, $Y$ and $R$ be as in Example \ref{exa.dowker.1}.
Then, it is easy to see that%
\[
C_{X}=\left\{  \text{all }U\subseteq X\text{ such that }\left\{  3,4\right\}
\not \subseteq U\right\}  \ \ \ \ \ \ \ \ \ \ \text{and}%
\ \ \ \ \ \ \ \ \ \ C_{Y}=\left\{  \text{all }V\subseteq Y\right\}  .
\]
(The latter is because $1$ is an $X$-neighbor of any subset $V\subseteq Y$.)
\end{example}

\begin{remark}
\label{rmk.bipar} The binary relation $R$ can be re-encoded as a bipartite
graph whose black vertices are the elements of $X$, whose white vertices are
the elements of $Y$, and whose edges are the edges $\left\{  x,y\right\}  $
for all $x\in X$ and $y\in Y$ satisfying $x\ R\ y$. Thus, a $Y$-conic set
means a set of black vertices that is either empty or has a common neighbor,
whereas an $X$-conic set means a set of white vertices that is either empty or
has a common neighbor.
\end{remark}

Dowker's theorem (in Bj\"{o}rner's version \cite[Theorem 10.9]{Bjorne95}) says
the following:

\begin{theorem}
[Dowker's theorem]\label{thm.dowker}The complexes $C_{X}$ and $C_{Y}$ are homotopy-equivalent.
\end{theorem}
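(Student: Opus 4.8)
The plan is to interpolate between $C_{X}$ and $C_{Y}$ through a single auxiliary complex and then to collapse this complex onto each of them by an explicit acyclic matching. Working with the bipartite graph of Remark \ref{rmk.bipar}, I would introduce the \emph{biclique complex} $\mathcal{D}$ on the ground set $X\sqcup Y$ (disjoint union), whose faces are
\[
\mathcal{D}=\set{U\sqcup V\ :\ U\subseteq X,\ V\subseteq Y,\ U\in C_{X},\ V\in C_{Y},\ U\ \mathbf{R}\ V}.
\]
This is a simplicial complex (passing to subsets preserves $U\in C_{X}$, $V\in C_{Y}$ and $U\ \mathbf{R}\ V$), its faces contained in $X$ are exactly the faces of $C_{X}$, and its faces contained in $Y$ are exactly those of $C_{Y}$. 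The observation that makes everything work is that the conic conditions are almost always automatic: whenever $U\neq\varnothing$, any $x\in U$ is an $X$-neighbor of every $V$ with $U\ \mathbf{R}\ V$, so $V\in C_{Y}$ is forced; dually for $V\neq\varnothing$. Thus the clauses $U\in C_{X}$ and $V\in C_{Y}$ only do work in the degenerate cases where one side is empty, and they are precisely what makes the two ``pure'' subcomplexes come out as $C_{X}$ and $C_{Y}$ rather than as full simplices.

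First I would collapse $\mathcal{D}$ onto $C_{Y}$. Consider the projection $\phi\left(U\sqcup V\right)=U$, which is an order-preserving map from $\mathcal{D}$ to the poset $C_{X}$. The fiber over $U=\varnothing$ is $\set{\varnothing\sqcup V:V\in C_{Y}}\cong C_{Y}$. For a nonempty $U\in C_{X}$, the fiber is the set of all $V\subseteq N\left(U\right)$, where $N\left(U\right)=\set{y\in Y:U\ \mathbf{R}\ \set{y}}$ is nonempty (as $U$ has a $Y$-neighbor); by the remark above this fiber is the \emph{full} simplex on $N\left(U\right)$. On each such fiber I would use the element matching that toggles a fixed vertex $y_{U}\in N\left(U\right)$ (say the least one in a fixed order on $Y$), pairing $U\sqcup V$ with $U\sqcup\left(V\cup\set{y_{U}}\right)$; this is a perfect acyclic matching with no critical faces. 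Leaving the fiber over $\varnothing$ unmatched and invoking the Cluster Lemma (Patchwork Theorem) for acyclic matchings along a poset map, the union is an acyclic matching on $\mathcal{D}$ whose critical faces are exactly $\set{\varnothing\sqcup V:V\in C_{Y}}$. Since these critical faces form a subcomplex isomorphic to $C_{Y}$, the matching yields a sequence of elementary collapses $\mathcal{D}\searrow C_{Y}$.

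By the symmetric argument, projecting instead to the $Y$-coordinate and element-matching the (again full-simplex) fibers over each nonempty $V\in C_{Y}$, one obtains $\mathcal{D}\searrow C_{X}$. Hence $C_{X}$ and $C_{Y}$ are each obtained from the common complex $\mathcal{D}$ by collapses, so they are homotopy-equivalent; in fact this even gives the simple-homotopy equivalence of Barmak's strengthening mentioned in the abstract. The main obstacle I anticipate is not any single hard step but the empty-set bookkeeping: one must define $\mathcal{D}$ so that its two boundary subcomplexes are \emph{literally} $C_{X}$ and $C_{Y}$, check that the fibers over nonempty indices are genuine simplices (this is exactly where the conic conditions become automatic and must be verified carefully for $\varnothing$ as well), and confirm that the assembled matching is acyclic with its critical set forming a subcomplex, so that the passage from ``acyclic matching'' to ``collapse'' is legitimate.
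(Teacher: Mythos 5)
Your proposal is correct and is essentially the paper's own argument: your complex $\mathcal{D}$ coincides with the biclique complex $B=\set{\text{bicliques}}\cup C_{X}\cup C_{Y}$ of Proposition \ref{prop.bicl-comp}, and your fiberwise matching (toggling a canonically chosen neighbor vertex over each nonempty face of the other side) is exactly the matching $F\mapsto F\mathbin{\triangle}\set{f\tup{F}}$ used in the proof of Theorem \ref{thm.biclique.collapse}, up to choosing the least rather than the largest neighbor. The only cosmetic differences are that you certify acyclicity via the Cluster/Patchwork Lemma along the projection to $C_{X}$, where the paper uses a variant of the Linusson--Shareshian pairing lemma (Lemma \ref{lem.pairing2}), and that the paper additionally spells out the reduction to disjoint $X$ and $Y$, which for the simple-homotopy strengthening you allude to requires the extra Lemma \ref{lem.isomorphic.simple.homotopy.type}.
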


In \cite[Theorem 4.4]{Barmak11}, Barmak strengthened this theorem to a
simple-homotopy-equivalence:

\begin{theorem}
\label{thm.barmak}The complexes $C_{X}$ and $C_{Y}$ are
simple-homotopy-equivalent. 
\end{theorem}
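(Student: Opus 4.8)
The plan is to interpolate between $C_X$ and $C_Y$ through a single auxiliary complex and to collapse that complex onto each of them. On the ground set $X \sqcup Y$, define the \emph{biclique complex} $B$ to consist of all sets $U \cup V$ with $U \subseteq X$ and $V \subseteq Y$ satisfying $U\ \mathbf{R}\ V$, $U \in C_X$ and $V \in C_Y$. Since each of these three conditions is inherited by subsets, $B$ is a simplicial complex. Its faces contained in $X$ (those with $V = \varnothing$) are exactly the faces of $C_X$, and likewise its faces contained in $Y$ are exactly the faces of $C_Y$; thus $C_X$ and $C_Y$ sit inside $B$ as the subcomplexes of ``$X$-faces'' and ``$Y$-faces''. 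Because an elementary collapse and its inverse expansion are simple-homotopy equivalences, it suffices to show that $B$ collapses onto $C_X$ and, by the symmetry of the construction under interchanging $X$ and $Y$, onto $C_Y$; chaining these gives that $C_X$ and $C_Y$ are simple-homotopy equivalent.

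To collapse $B$ onto $C_X$, I would build a perfect acyclic matching on the set $\mathcal{F}$ of faces of $B$ whose $Y$-part is nonempty, since $\mathcal{F}$ is precisely $B \setminus C_X$ and such a matching witnesses $B \searrow C_X$. For a face $U \cup V \in \mathcal{F}$, let $N\tup{V} = \set{x \in X : \set{x}\ \mathbf{R}\ V}$ be the set of common neighbors of $V$. As $V$ is a nonempty face of $C_Y$, it has an $X$-neighbor, so $N\tup{V} \neq \varnothing$. Fixing a linear order on $X$ once and for all, let $x^{*}\tup{V}$ be the least element of $N\tup{V}$, and match $U \cup V$ with $\tup{U \triangle \set{x^{*}\tup{V}}} \cup V$, i.e.\ toggle the membership of $x^{*}\tup{V}$. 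This is a fixed-point-free involution of $\mathcal{F}$: the partner has the same $Y$-part $V$ (so it again lies in $\mathcal{F}$), and toggling $x^{*}\tup{V}$ stays inside $B$ precisely because $x^{*}\tup{V} \in N\tup{V}$.

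The heart of the argument is acyclicity, for which I would invoke the Cluster Lemma (Patchwork Theorem) of discrete Morse theory. The map $U \cup V \mapsto V$ is order-preserving from $\mathcal{F}$ to the poset of nonempty faces of $C_Y$, and the matching above pairs faces only within a common fibre $\set{U \cup V : U \subseteq N\tup{V}}$. Each such fibre is a Boolean lattice $2^{N\tup{V}}$, on which ``toggle the fixed element $x^{*}\tup{V}$'' is the standard acyclic element matching. Being fibrewise acyclic, the whole matching is acyclic by the Cluster Lemma, and it is perfect on $\mathcal{F}$ by construction. Hence $B \searrow C_X$, and symmetrically $B \searrow C_Y$, which proves the theorem.

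The step I expect to be the main obstacle is precisely this acyclicity, together with the bookkeeping around the empty set. One must confirm that the pivot $x^{*}\tup{V}$ depends only on $V$ (so that matched pairs really share a fibre, which is what makes the Cluster Lemma apply) and dispose of the degenerate cases in which $X$, $Y$, or $R$ is empty. These edge cases are exactly where the naive biclique complex, allowing all of $2^{X}$ and $2^{Y}$ as faces, would have the wrong homotopy type; this is why the conicity constraints $U \in C_X$ and $V \in C_Y$ are built into $B$ from the outset.
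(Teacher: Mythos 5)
Your construction is essentially the paper's. Your complex $B$ coincides with the paper's biclique complex $\left\{\text{bicliques}\right\}\cup C_X\cup C_Y$ (when $U$ and $V$ are both nonempty, $U\ \mathbf{R}\ V$ already forces $U\in C_X$ and $V\in C_Y$, so your two conicity conditions only matter for the faces lying entirely in $X$ or entirely in $Y$), and your matching -- toggle the least common $X$-neighbor of the $Y$-part -- is the paper's matching with ``least'' in place of ``largest''. For acyclicity the paper uses a bespoke pairing lemma (a variant of Linusson--Shareshian) driven by the monotonicity $G\subseteq F\Rightarrow f(F)\le f(G)$ of the pivot; your fibrewise Cluster Lemma argument over the poset map $F\mapsto F\cap Y$ is the same mechanism in different packaging, and it is correct: the matching stays within fibres because the pivot depends only on $F\cap Y$, each fibre is the Boolean lattice $2^{N(V)}$ on which the toggle matching is perfect and acyclic, and the degenerate cases you worry about are harmless (if $B\setminus C_X$ is empty, the collapse is the empty sequence).

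The one genuine gap is that Theorem \ref{thm.barmak} is stated for arbitrary finite $X$ and $Y$, not necessarily disjoint, whereas your whole construction lives on $X\sqcup Y$ and needs $F\cap X$ and $F\cap Y$ to recover $U$ and $V$ unambiguously. Writing $X\sqcup Y$ amounts to relabelling the elements, and to conclude you then need the fact that isomorphic simplicial complexes are simple-homotopy-equivalent. This is not entirely free: simple-homotopy-equivalence as defined here is a relation between literal set-systems, and the paper has to prove this invariance as Lemma \ref{lem.isomorphic.simple.homotopy.type} -- amusingly, by applying the biclique collapse (Theorem \ref{thm.biclique.collapse}) to the vertex--face incidence relation of a complex. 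Add that reduction (or cite such a lemma) and your proof is complete.
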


In the two following sections, we shall reprove Theorem \ref{thm.barmak} (and
thus also Theorem \ref{thm.dowker}) using discrete Morse theory, by exhibiting
a larger complex that collapses to each of $C_{X}$ and $C_{Y}$. (By
\cite[Proposition 9.28]{Kozlov20}, this suffices to show that $C_{X}$ and
$C_{Y}$ are simple-homotopy-equivalent.) We note that this claim is purely
combinatorial, requiring no topology to state, and our proof will be similarly combinatorial.

\section{Reminders on discrete Morse theory}

Before we step to the actual proof, let us quickly recall some preliminaries.

First, we recall the notion of a \emph{subcomplex}:

\begin{definition}
Let $\Delta$ and $\Gamma$ be two complexes (possibly with different ground
sets). Then, we say that $\Gamma$ is a \emph{subcomplex} of $\Delta$ if and
only if $\Gamma\subseteq\Delta$.
\end{definition}

Next, and more importantly, let us recall the discrete Morse theory that we
will need. In this, we follow \cite[Chapter 10]{Kozlov20}, adapted slightly to
simplify our life (copying several definitions from \cite[\S 7]{GrKaLe21}). We
begin with the definition of
an elementary simplicial collapse 
(\cite[\S 9.1]{Kozlov20}):

\begin{definition}
  \label{def.elementary.collapse}
  Let $\Gamma$ be a subcomplex of a simplicial complex $\Delta$. If there 
  exist faces $\tau$ and $\sigma$ in $\Delta \setminus \Gamma$
  such that
  \[
  \Delta = \Gamma \cup \set{\tau, \sigma}
  \qquad \text{ and } \qquad \tau \prec \sigma ,
  \]
  then we say that $\Delta$ \emph{collapses to} $\Gamma$
  by an \emph{elementary simplicial collapse}.
  (Equivalently, we say in this case that
  the inclusion $\Gamma \subseteq \Delta$ is an
  \emph{elementary simplicial expansion}.)
\end{definition}

This definition is equivalent to \cite[Definition 9.1]{Kozlov20}.
(Indeed, \cite[Definition 9.1]{Kozlov20} requires that the
only faces of $\Delta$ that contain $\tau$ are $\tau$ and $\sigma$,
whereas we instead require that $\Gamma$ is a subcomplex;
but these two requirements imply each other.)

Elementary simplicial collapses can be composed, leading to
the notion of (non-elementary) collapses
(\cite[Definition 9.4]{Kozlov20}):

\begin{definition}
  \label{def.simple.homotopy.collapse}
  Let $\Gamma$ and $\Delta$ be two simplicial complexes.
  We say that $\Delta$ \emph{collapses to} $\Gamma$
  if $\Gamma$ can be obtained from $\Delta$ by a sequence
  (possibly empty) of elementary simplicial collapses
  -- i.e., if there exist finitely many simplicial complexes $\Delta_0, \Delta_1, \dots, \Delta_n$
  with $\Delta_0 = \Delta$ and $\Delta_n = \Gamma$ and such that
  for each $i \in \set{0, 1, \dots, n-1}$,
  the complex $\Delta_i$ collapses to $\Delta_{i+1}$ by an
  elementary collapse.
  (This clearly implies that $\Gamma$ is a subcomplex of
  $\Delta$.)
  
  We shall use the notation ``$\Delta\searrow\Gamma$''\ for
  the statement ``$\Delta$ collapses to
  $\Gamma$''.
\end{definition}

Composing elementary simplicial collapses with
their inverses leads to the weaker notion of
simple-homotopy-equivalence
(\cite[Definition 9.26]{Kozlov20}, where it is called
``having the same simple homotopy type''):

\begin{definition}
  \label{def.simple.homotopy.equivalence}

  Two simplicial complexes $\Gamma$ and $\Delta$ are said to be \emph{simple-homotopy-equivalent} if
  there exist finitely many simplicial complexes $\Delta_0, \Delta_1, \dots, \Delta_n$
  with $\Delta_0 = \Delta$ and $\Delta_n = \Gamma$ and with the following property:
  For each $i \in \set{0, 1, \dots, n-1}$,
  either $\Delta_{i+1}$ collapses to $\Delta_i$ or $\Delta_i$ collapses to $\Delta_{i+1}$.
\end{definition}

Clearly, simple-homotopy-equivalence is an equivalence relation.

Next, we recall the definition of a matching (\cite[Definition 10.6]{Kozlov20},
specialized to subposets of a complex):

\begin{definition}
\label{def.parmat} Let $\Delta$ be a simplicial complex. A \emph{partial
matching} (or \emph{matching} for short) on $\Delta$ shall mean a pair
$\left(  M,\mu\right)  $, where $M$ is a subset of $\Delta$ (that is, a set of
faces of $\Delta$), and where $\mu:M\rightarrow M$ is an involution (that is,
a map satisfying $\mu\circ\mu=\operatorname*{id}$) with the property that each
$F\in M$ satisfies%
\[
\text{either }\mu\left(  F\right)  \prec F\text{ or }\mu\left(  F\right)
\succ F.
\]

Note that $M$ is uniquely determined by $\mu$ (namely, as the domain of $\mu
$), so that we will refer to $\mu$ alone as a matching.
\end{definition}

\begin{example}
\label{exa.parmat.0} Let $W=\left\{  1,2,3\right\}  $. Let $\Delta$ be the
complex with ground set $W$ that contains all the eight subsets of $W$ as
faces. Set $M:=\Delta\setminus\left\{  \varnothing,W\right\}  $, and define a
map $\mu:M\rightarrow M$ by%
\begin{align*}
\mu\left(  \left\{  1\right\}  \right)   &  =\left\{  1,2\right\}  ,\qquad
\mu\left(  \left\{  2\right\}  \right)  =\left\{  2,3\right\}  ,\qquad
\mu\left(  \left\{  3\right\}  \right)  =\left\{  3,1\right\}  ,\\
\mu\left(  \left\{  1,2\right\}  \right)   &  =\left\{  1\right\}  ,\qquad
\mu\left(  \left\{  2,3\right\}  \right)  =\left\{  2\right\}  ,\qquad
\mu\left(  \left\{  3,1\right\}  \right)  =\left\{  3\right\}  .
\end{align*}
Then, $\left(  M,\mu\right)  $ is a matching on $\Delta$.
\end{example}

Partial matchings on simplicial complexes are useful for counting purposes
(being a simple kind of sign-reversing involutions; cf. \cite[Chapter
2]{Sagan19}). However, under a certain condition, they become powerful tools
for understanding the topology of $\Delta$. This condition is known as
\textquotedblleft acyclicity\textquotedblright, and is defined as follows
(\cite[Definition 10.7]{Kozlov20}):

\begin{definition}
Let $\Delta$ be a simplicial complex. Let $\left(  M,\mu\right)  $ be a
matching on $\Delta$.

\begin{enumerate}
\item[\textbf{(a)}] A \emph{cycle} of $\mu$ means an $n$-tuple $\left(
F_{1},F_{2},\ldots,F_{n}\right)  $ of distinct faces in $M$ such that $n\geq2$
and%
\[
F_{1}\succ\mu\left(  F_{1}\right)  \prec F_{2}\succ\mu\left(  F_{2}\right)
\prec F_{3}\succ\cdots\prec F_{n}\succ\mu\left(  F_{n}\right)  \prec F_{1}%
\]
(that is, such that each $i\in\left\{  1,2,\ldots,n\right\}  $ satisfies
$F_{i}\succ\mu\left(  F_{i}\right)  \prec F_{i+1}$, where $F_{n+1}:=F_{1}$).

\item[\textbf{(b)}] The matching $\mu$ is said to be \emph{acyclic} if it has
no cycle.
\end{enumerate}
\end{definition}

\begin{example}
Let $W$, $\Delta$, $M$ and $\mu$ be as in Example \ref{exa.parmat.0}. Then,
the $3$-tuple $\left(  \left\{  1,2\right\}  ,\ \left\{  3,1\right\}
,\ \left\{  2,3\right\}  \right)  $ is a cycle of the matching $\mu$, since%
\[
\left\{  1,2\right\}  \succ\underbrace{\left\{  1\right\}  }_{=\mu\left(
\left\{  1,2\right\}  \right)  }\prec\left\{  3,1\right\}  \succ
\underbrace{\left\{  3\right\}  }_{=\mu\left(  \left\{  3,1\right\}  \right)
}\prec\left\{  2,3\right\}  \succ\underbrace{\left\{  2\right\}  }%
_{=\mu\left(  \left\{  2,3\right\}  \right)  }\prec\left\{  1,2\right\}  .
\]
Thus, the matching $\mu$ is not acyclic. However, if we remove the faces
$\left\{  1\right\}  $ and $\left\{  1,2\right\}  $ from $M$ (and
correspondingly restrict $\mu$ to the set of the remaining four faces in $M$),
then the cycle disappears, and we obtain an acyclic matching.
\end{example}

Discrete Morse theory can be seen as the study of acyclic matchings. Roughly
speaking, an acyclic matching $\left(  M,\mu\right)  $ on a complex $\Delta$
allows us to \textquotedblleft cancel\textquotedblright\ the faces in $M$ when
computing homotopical or homological data, albeit the precise meaning of
\textquotedblleft cancelling\textquotedblright\ here depends on the situation.
Different instances of this principle can be found in \cite[Chapters 10, 11,
12, 13]{Kozlov20}; we will only use the following one (a part of \cite[Theorem
10.9]{Kozlov20}):

\begin{theorem}
\label{thm.dmt.collapse}Let $\Gamma$ be a subcomplex of a simplicial complex
$\Delta$. Assume that there exists an acyclic matching $\left(  M,\mu\right)
$ on $\Delta$ with $M=\Delta\setminus\Gamma$ (that is, $M$ consists of all
faces of $\Delta$ that don't belong to $\Gamma$). Then, $\Delta\searrow\Gamma$.
\end{theorem}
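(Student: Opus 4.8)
The plan is to prove Theorem~\ref{thm.dmt.collapse} by induction on the number of faces in $M = \Delta \setminus \Gamma$, peeling off one matched pair at a time and showing that each such peeling is an elementary simplicial collapse. The key structural fact that makes the induction run is the existence of a \emph{maximal} matched face to peel off first: since $\mu$ is acyclic, I expect to be able to find a face $\sigma \in M$ that is maximal in $\Delta$ with respect to inclusion and whose matched partner $\tau := \mu(\sigma)$ satisfies $\tau \prec \sigma$. The acyclicity is exactly what prevents the situation where every candidate for $\sigma$ is blocked, so the first real step is to formalize this extraction.

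\begin{verlong}
\end{verlong}
More precisely, first I would set up the induction on $\abs{M}$, with the base case $M = \varnothing$ being trivial since then $\Delta = \Gamma$ and $\Delta \searrow \Gamma$ holds by the empty sequence of collapses. For the inductive step, I would like to locate the pair $(\tau, \sigma)$ to remove. The natural candidate is to consider a face $\sigma$ of $\Delta$ that has maximal cardinality among all faces in $M$ with $\mu(\sigma) \prec \sigma$; I then set $\tau := \mu(\sigma)$. I must check that $\Delta' := \Delta \setminus \set{\tau, \sigma}$ is still a simplicial complex, i.e.\ closed under taking subsets. The only way this could fail is if some face $G \in \Delta'$ strictly contains $\tau$ or $\sigma$; maximality of $\abs{\sigma}$ handles the containments of $\sigma$, and the delicate case is a face $G \succ \tau$ with $G \neq \sigma$.

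The heart of the argument, and what I expect to be the main obstacle, is ruling out such a ``spurious cofacet'' $G \succ \tau$ with $G \neq \sigma$. This is precisely where acyclicity must be invoked: if such a $G$ existed and were itself matched downward, one could attempt to follow the matching to build a cycle $\tau \prec G \succ \mu(G) \prec \cdots$, eventually returning to $\tau$, contradicting that $\mu$ has no cycle. Making this rigorous requires an auxiliary lemma asserting that when $\sigma$ is chosen maximally (or, in a cleaner formulation, when one follows the chain of cover relations alternating between $\mu$ and larger faces), the process cannot loop, so $\sigma$ is genuinely the unique cofacet of $\tau$ inside $\Delta$. Once this is established, $\Delta = \Gamma \cup \set{\tau, \sigma}$ in the case $\Gamma = \Delta'$ gives an elementary collapse $\Delta \searrow \Delta'$ by Definition~\ref{def.elementary.collapse}, and the restriction of $(M, \mu)$ to $\Delta' \setminus \Gamma = M \setminus \set{\tau, \sigma}$ is again an acyclic matching (removing faces cannot create cycles). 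Applying the induction hypothesis to $\Delta' \searrow \Gamma$ and composing the two collapses yields $\Delta \searrow \Gamma$, completing the proof. Since this is cited as a known part of \cite[Theorem 10.9]{Kozlov20}, I would in practice defer the combinatorial core to that reference, but the induction-with-maximal-extraction skeleton above is the argument I would reconstruct.
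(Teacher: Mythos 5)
The paper offers no proof of this theorem at all --- it is quoted directly from \cite[Theorem 10.9]{Kozlov20} --- so your decision to defer the combinatorial core to that reference is consistent with the paper. But since you do sketch an argument, I should point out that the sketch has a genuine gap at exactly the place you flag as ``the heart of the argument''. Choosing $\sigma$ of maximal cardinality among the downward-matched faces of $M$ does \emph{not} guarantee that $\tau = \mu(\sigma)$ has no spurious cofacet, and the existence of a spurious cofacet does \emph{not} contradict acyclicity. Concretely: let $\Delta$ have facets $\{1,2\}$ and $\{1,3\}$, let $\Gamma = \{\varnothing, \{3\}\}$, and match $\{1\} \leftrightarrow \{1,3\}$ and $\{2\} \leftrightarrow \{1,2\}$. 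This matching is acyclic (the only candidate cycle $\left(\{1,3\},\{1,2\}\right)$ fails because $\mu\left(\{1,2\}\right) = \{2\}$ is not contained in $\{1,3\}$). Both downward-matched faces have the maximal cardinality $2$, so your rule permits $\sigma = \{1,3\}$ and $\tau = \{1\}$; then $G = \{1,2\}$ is a spurious cofacet of $\tau$, yet following the matching from $G$ leads to $\mu(G) = \{2\}$, whose only cofacet in $M$ is $G$ itself, so the walk terminates without ever returning to $\tau$ and no cycle materializes. Here $\Delta \setminus \{\tau,\sigma\}$ is not a subcomplex, and your induction stalls.

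The correct use of acyclicity is at the point of \emph{selecting} $\sigma$, not in deriving a contradiction after an arbitrary maximal choice. Let $d$ be the maximal cardinality of a face of $M$, and let $S$ be the set of downward-matched faces of $M$ of cardinality $d$; this $S$ is nonempty when $M \neq \varnothing$, since a face of $M$ of cardinality $d$ cannot be matched upward. Put a directed edge $F \to G$ between distinct $F, G \in S$ whenever $\mu(F) \prec G$. A directed cycle in this digraph is literally a cycle of the matching, so the digraph is acyclic and therefore has a sink $\sigma$. For this $\sigma$ (and $\tau := \mu(\sigma)$), any cofacet $G \neq \sigma$ of $\tau$ in $\Delta$ would have to lie in $M$ (since $\Gamma$ is a subcomplex and $\tau \notin \Gamma$), have cardinality $d$, and be matched downward (else $\mu(G) \in M$ would have cardinality $d+1$) --- i.e., it would be an out-neighbour of $\sigma$ in the digraph, contradicting that $\sigma$ is a sink. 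Maximality of $d$ rules out faces of $\Delta$ properly containing $\sigma$, so $\Delta \setminus \{\tau, \sigma\}$ is a subcomplex and the elementary collapse goes through; the rest of your induction (the restricted matching stays acyclic, collapses compose) is fine. Your parenthetical remark about following the chain of cover relations until it cannot continue is the right instinct, but it must replace, rather than supplement, the maximal-cardinality selection.
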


Acyclic matchings are combinatorial objects and can often be constructed by
hand. However, there is also a number of \textquotedblleft
macros\textquotedblright\ available to construct them from simpler objects,
such as the pairing lemma of Linusson and Shareshian (\cite[Lemma
3.4]{LinSha03}). We will use one such \textquotedblleft
macro\textquotedblright\ -- actually a version of \cite[Lemma 3.4]{LinSha03}
adapted to our situation:

\begin{lemma}
\label{lem.pairing2}Let $W$ be a finite partially ordered set. Let $\Delta$ be
a simplicial complex with ground set $W$. Let $M$ be a subset of $\Delta$. Let
$f:M\rightarrow W$ be a function. Consider the following two conditions:

\begin{enumerate}
\item[\textbf{(C1)}] For any $F\in M$, we have $F\cup\left\{  f\left(
F\right)  \right\}  \in M$ and $F\setminus\left\{  f\left(  F\right)
\right\}  \in M$ and%
\begin{equation}
f\left(  F\cup\left\{  f\left(  F\right)  \right\}  \right)  =f\left(
F\setminus\left\{  f\left(  F\right)  \right\}  \right)  =f\left(  F\right)  .
\label{eq.lem.pairing2.A2}%
\end{equation}

\item[\textbf{(C2)}] For any $F\in M$ and any $G\in M$ satisfying $G\subseteq
F$, we have $f\left(  F\right)  \leq f\left(  G\right)  $.
\end{enumerate}

Assume that Condition \textbf{(C1)} holds. Then:

\begin{enumerate}
\item[\textbf{(a)}] The map
\begin{align*}
\mu:M  &  \rightarrow M,\\
F  &  \mapsto%
\begin{cases}
F\setminus\left\{  f\left(  F\right)  \right\}  , & \text{if }f\left(
F\right)  \in F;\\
F\cup\left\{  f\left(  F\right)  \right\}  , & \text{if }f\left(  F\right)
\notin F
\end{cases}
\end{align*}
is well-defined, and the pair $\left( M, \mu\right) $ is a matching on
$\Delta$.

\item[\textbf{(b)}] If Condition \textbf{(C2)} holds as well, then this
matching $\left(  M,\mu\right)  $ is acyclic.
\end{enumerate}
\end{lemma}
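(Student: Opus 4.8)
The plan is to prove part (a) first, then use it to establish the acyclicity in part (b) by contradiction. For part (a), I would start by checking that $\mu$ is well-defined, i.e., that $\mu(F)\in M$ for every $F\in M$. This is immediate from Condition (C1): if $f(F)\in F$ then $\mu(F)=F\setminus\{f(F)\}\in M$, and if $f(F)\notin F$ then $\mu(F)=F\cup\{f(F)\}\in M$. The next thing to verify is that $\mu$ is an involution. The key computation is to apply $f$ to $\mu(F)$. If $f(F)\in F$, then $\mu(F)=F\setminus\{f(F)\}$, and by the second equality in \eqref{eq.lem.pairing2.A2} we get $f(\mu(F))=f(F)$; since $f(F)\notin\mu(F)$, the definition of $\mu$ gives $\mu(\mu(F))=\mu(F)\cup\{f(F)\}=F$. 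The case $f(F)\notin F$ is symmetric, using the first equality in \eqref{eq.lem.pairing2.A2}. Finally, since $\mu(F)$ and $F$ differ by exactly one element (namely $f(F)$), we have either $\mu(F)\prec F$ or $\mu(F)\succ F$, which is precisely the matching condition of Definition \ref{def.parmat}.

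For part (b), I would argue by contradiction: suppose $\mu$ has a cycle $(F_1,F_2,\ldots,F_n)$ with $n\geq 2$ and
\[
F_1\succ\mu(F_1)\prec F_2\succ\mu(F_2)\prec\cdots\prec F_n\succ\mu(F_n)\prec F_1 .
\]
The relation $F_i\succ\mu(F_i)$ means $\mu(F_i)=F_i\setminus\{f(F_i)\}$, so $f(F_i)\in F_i$; write $G_i:=\mu(F_i)=F_i\setminus\{f(F_i)\}$, which satisfies $f(G_i)=f(F_i)$ by \eqref{eq.lem.pairing2.A2}. The relation $G_i\prec F_{i+1}$ means $G_i\subseteq F_{i+1}$ (indices cyclic). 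I would now chase the values of $f$ around the cycle. Applying Condition (C2) to $G_i\subseteq F_{i+1}$ gives $f(F_{i+1})\leq f(G_i)=f(F_i)$, so the sequence $f(F_1),f(F_2),\ldots,f(F_n)$ is weakly decreasing around the cycle and hence constant; call this common value $w$. Thus $f(F_i)=w$ for all $i$, so $w\in F_i$ and $w\notin G_i$ for every $i$.

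The main obstacle — and the crux of the argument — is to derive a contradiction from the constancy of $f$. Since $w\notin G_i$ but $G_i\subseteq F_{i+1}$ and $w\in F_{i+1}$, the face $G_i$ is a subset of $F_{i+1}$ not containing $w$, whereas $F_{i+1}=G_{i+1}\cup\{w\}$; comparing cardinalities (each $G_j\prec F_j$ and $G_j\prec F_{j+1}$, so $F_j$ and $F_{j+1}$ both have exactly one more element than $G_j$) forces $G_i=F_{i+1}\setminus\{w\}=G_{i+1}$ for every $i$. Hence all the $G_i$ coincide, and therefore all the $F_i=G_i\cup\{w\}$ coincide as well, contradicting the requirement that the faces $F_1,\ldots,F_n$ in a cycle be distinct (with $n\geq 2$). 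This contradiction shows that no cycle exists, so $\mu$ is acyclic.
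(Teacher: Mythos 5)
Your proposal is correct and follows essentially the same route as the paper's proof: part (a) via the equalities in Condition \textbf{(C1)}, and part (b) by showing that $f$ is weakly decreasing (hence constant) around a hypothetical cycle and then using the one-element-difference argument to force consecutive faces to coincide, contradicting distinctness. The only cosmetic difference is that the paper stops after deducing $F_1=F_2$ rather than collapsing the entire cycle.
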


\begin{proof}
Condition \textbf{(C1)} ensures that each $F\in M$ satisfies $F\setminus
\left\{  f\left(  F\right)  \right\}  \in M$ and $F\cup\left\{  f\left(
F\right)  \right\}  \in M$ and therefore
\[%
\begin{cases}
F\setminus\left\{  f\left(  F\right)  \right\}  , & \text{if }f\left(
F\right)  \in F;\\
F\cup\left\{  f\left(  F\right)  \right\}  , & \text{if }f\left(  F\right)
\notin F
\end{cases}
\in M.
\]
Hence, the map
\begin{align*}
\mu:M  &  \rightarrow M,\\
F  &  \mapsto%
\begin{cases}
F\setminus\left\{  f\left(  F\right)  \right\}  , & \text{if }f\left(
F\right)  \in F;\\
F\cup\left\{  f\left(  F\right)  \right\}  , & \text{if }f\left(  F\right)
\notin F
\end{cases}
\end{align*}
is well-defined. We shall now show that this map is an involution:

\begin{statement}
\textit{Claim 1:} We have $\mu\left(  \mu\left(  G\right)  \right)  =G$ for
each $G\in M$.
\end{statement}

\begin{proof}
[Proof of Claim 1.]Let $G\in M$. We must show that $\mu\left(  \mu\left(
G\right)  \right)  =G$. We are in one of the following two cases:

\textit{Case 1:} We have $f\left(  G\right)  \in G$.

\textit{Case 2:} We have $f\left(  G\right)  \notin G$.

Let us first consider Case 1. In this case, we have $f\left(  G\right)  \in
G$. Hence, the definition of $\mu$ yields $\mu\left(  G\right)  =G\setminus
\left\{  f\left(  G\right)  \right\}  $. Let $F=\mu\left(  G\right)  $. Thus,
$F=\mu\left(  G\right)  =G\setminus\left\{  f\left(  G\right)  \right\}  $.
Hence, $f\left(  G\right)  \notin F$.

\begin{vershort}
The second equality sign in the equality (\ref{eq.lem.pairing2.A2}) (applied
to $G$ instead of $F$) yields $f\left(  G\setminus\left\{  f\left(  G\right)
\right\}  \right)  =f\left(  G\right)  $. In other words, $f\left(  F\right)
=f\left(  G\right)  $ (since $F=G\setminus\left\{  f\left(  G\right)
\right\}  $). Thus, $f\left(  F\right)  =f\left(  G\right)  \notin F$. Hence,
the definition of $\mu$ yields
\[
\mu\left(  F\right)  =\underbrace{F}_{=G\setminus\left\{  f\left(  G\right)
\right\}  }\cup\left\{  \underbrace{f\left(  F\right)  }_{=f\left(  G\right)
}\right\}  =\left(  G\setminus\left\{  f\left(  G\right)  \right\}  \right)
\cup\left\{  f\left(  G\right)  \right\}  =G
\]
(since $f\left(  G\right)  \in G$). In other words, $\mu\left(  \mu\left(
G\right)  \right)  =G$ (since $F=\mu\left(  G\right)  $). This proves Claim 1
in Case 1.
\end{vershort}

\begin{verlong}
The equality (\ref{eq.lem.pairing2.A2}) (applied to $G$ instead of $F$) yields
$f\left(  G\cup\left\{  f\left(  G\right)  \right\}  \right)  =f\left(
G\setminus\left\{  f\left(  G\right)  \right\}  \right)  =f\left(  G\right)
$. Thus, in particular, $f\left(  G\setminus\left\{  f\left(  G\right)
\right\}  \right)  =f\left(  G\right)  $. In other words, $f\left(  F\right)
=f\left(  G\right)  $ (since $F=G\setminus\left\{  f\left(  G\right)
\right\}  $). Thus, $f\left(  F\right)  =f\left(  G\right)  \notin F$. Hence,
the definition of $\mu$ yields
\[
\mu\left(  F\right)  =\underbrace{F}_{=G\setminus\left\{  f\left(  G\right)
\right\}  }\cup\left\{  \underbrace{f\left(  F\right)  }_{=f\left(  G\right)
}\right\}  =\left(  G\setminus\left\{  f\left(  G\right)  \right\}  \right)
\cup\left\{  f\left(  G\right)  \right\}  =G
\]
(since $f\left(  G\right)  \in G$). In other words, $\mu\left(  \mu\left(
G\right)  \right)  =G$ (since $F=\mu\left(  G\right)  $). This proves Claim 1
in Case 1.
\end{verlong}

Let us now consider Case 2. In this case, we have $f\left(  G\right)  \notin
G$. Hence, the definition of $\mu$ yields $\mu\left(  G\right)  =G\cup\left\{
f\left(  G\right)  \right\}  $. Let $F=\mu\left(  G\right)  $. Thus,
$F=\mu\left(  G\right)  =G\cup\left\{  f\left(  G\right)  \right\}  $. Hence,
$f\left(  G\right)  \in F$.

\begin{vershort}
From (\ref{eq.lem.pairing2.A2}) (applied to $G$ instead of $F$), we obtain
$f\left(  G\cup\left\{  f\left(  G\right)  \right\}  \right)  =f\left(
G\right)  $. In other words, $f\left(  F\right)  =f\left(  G\right)  $ (since
$F=G\cup\left\{  f\left(  G\right)  \right\}  $). Thus, $f\left(  F\right)
=f\left(  G\right)  \in F$. Hence, the definition of $\mu$ yields
\[
\mu\left(  F\right)  =\underbrace{F}_{=G\cup\left\{  f\left(  G\right)
\right\}  }\setminus\left\{  \underbrace{f\left(  F\right)  }_{=f\left(
G\right)  }\right\}  =\left(  G\cup\left\{  f\left(  G\right)  \right\}
\right)  \setminus\left\{  f\left(  G\right)  \right\}  =G
\]
(since $f\left(  G\right)  \notin G$). In other words, $\mu\left(  \mu\left(
G\right)  \right)  =G$ (since $F=\mu\left(  G\right)  $). This proves Claim 1
in Case 2.
\end{vershort}

\begin{verlong}
The equality (\ref{eq.lem.pairing2.A2}) (applied to $G$ instead of $F$) yields
$f\left(  G\cup\left\{  f\left(  G\right)  \right\}  \right)  =f\left(
G\setminus\left\{  f\left(  G\right)  \right\}  \right)  =f\left(  G\right)
$. Thus, in particular, $f\left(  G\cup\left\{  f\left(  G\right)  \right\}
\right)  =f\left(  G\right)  $. In other words, $f\left(  F\right)  =f\left(
G\right)  $ (since $F=G\cup\left\{  f\left(  G\right)  \right\}  $). Thus,
$f\left(  F\right)  =f\left(  G\right)  \in F$. Hence, the definition of $\mu$
yields
\[
\mu\left(  F\right)  =\underbrace{F}_{=G\cup\left\{  f\left(  G\right)
\right\}  }\setminus\left\{  \underbrace{f\left(  F\right)  }_{=f\left(
G\right)  }\right\}  =\left(  G\cup\left\{  f\left(  G\right)  \right\}
\right)  \setminus\left\{  f\left(  G\right)  \right\}  =G
\]
(since $f\left(  G\right)  \notin G$). In other words, $\mu\left(  \mu\left(
G\right)  \right)  =G$ (since $F=\mu\left(  G\right)  $). This proves Claim 1
in Case 2.
\end{verlong}

We have now proved Claim 1 in both Cases 1 and 2. Hence, Claim 1 is proved.
\end{proof}

As a byeffect of our proof of Claim 1, we obtain the following:

\begin{statement}
\textit{Claim 2:} Let $G\in M$. Then, $f\left(  \mu\left(  G\right)  \right)
=f\left(  G\right)  $.
\end{statement}

\begin{proof}
[Proof of Claim 2.]Let $F=\mu\left(  G\right)  $. Then, $f\left(  F\right)
=f\left(  G\right)  $ (indeed, this equality has been proved in our above
proof of Claim 1, in both Cases 1 and 2). In other words, $f\left(  \mu\left(
G\right)  \right)  =f\left(  G\right)  $ (since $F=\mu\left(  G\right)  $).
This proves Claim 2.
\end{proof}

Here is a simple set-theoretical fact that will come useful later:

\begin{statement}
\textit{Claim 3:} Let $F,G,H$ be three subsets of $W$ such that $F\succ H\prec
G$. Let $s$ be an element such that $s\in F$ and $s\in G$ but $s\notin H$.
Then, $F=G$.
\end{statement}

\begin{vershort}

\begin{proof}
[Proof of Claim 3.]Left to the reader. (Show that both $F$ and $G$ equal
$H\cup\left\{  s\right\}  $.)
\end{proof}
\end{vershort}

\begin{verlong}

\begin{proof}
[Proof of Claim 3.]We have $H\prec F$ (since $F\succ H$). In other words,
$H\subseteq F$ and $\left\vert F\setminus H\right\vert =1$. In other words,
$F=H\cup\left\{  t\right\}  $ for some element $t\in F$. Consider this $t$.
Since $s\in F=H\cup\left\{  t\right\}  $, we must have either $s\in H$ or
$s\in\left\{  t\right\}  $. Since $s\notin H$, we thus have $s\in\left\{
t\right\}  $, so that $s=t$. Hence, $t=s$, so that $F=H\cup\left\{
\underbrace{t}_{=s}\right\}  =H\cup\left\{  s\right\}  $.

The same argument (applied to $G$ instead of $F$) shows that $G=H\cup\left\{
s\right\}  $ (since $H\prec G$). Comparing these two equalities, we obtain
$F=G$. This proves Claim 3.
\end{proof}
\end{verlong}

Now, we prove one more claim about $\mu$:

\begin{statement}
\textit{Claim 4:} Let $F,G\in M$ be such that $F\succ\mu\left(  F\right)
\prec G\succ\mu\left(  G\right)  $ and $f\left(  F\right)  =f\left(  G\right)
$. Then, $F=G$.
\end{statement}

\begin{proof}
[Proof of Claim 4.]If we had $f\left(  F\right)  \notin F$, then the
definition of $\mu$ would yield $\mu\left(  F\right)  =F\cup\left\{  f\left(
F\right)  \right\}  \supseteq F$, which would contradict $F\succ\mu\left(
F\right)  $. Thus, we must have $f\left(  F\right)  \in F$. Similarly,
$f\left(  G\right)  \in G$ (since $G\succ\mu\left(  G\right)  $).

Let us denote the element $f\left(  F\right)  =f\left(  G\right)  $ by $s$.
Thus, $s=f\left(  F\right)  \in F$ and $s=f\left(  G\right)  \in G$.

We have $f\left(  F\right)  \in F$. Thus, the definition of $\mu$ yields
$\mu\left(  F\right)  =F\setminus\left\{  f\left(  F\right)  \right\}
=F\setminus\left\{  s\right\}  $ (since $f\left(  F\right)  =s$). Hence,
$s\notin\mu\left(  F\right)  $ (since $s\notin F\setminus\left\{  s\right\}
$). Thus, Claim 3 (applied to $H=\mu\left(  F\right)  $) yields $F=G$. This
proves Claim 4.
\end{proof}

Claim 1 shows that $\mu\circ\mu=\operatorname*{id}$. In other words, the map
$\mu$ is an involution. Moreover, it has the property that each $F\in M$
satisfies%
\[
\text{either }\mu\left(  F\right)  \prec F\text{ or }\mu\left(  F\right)
\succ F
\]
(since we either have $f\left(  F\right)  \in F$, in which case $\mu\left(
F\right)  =F\setminus\left\{  f\left(  F\right)  \right\}  \prec F$, or we
have $f\left(  F\right)  \notin F$, in which case $\mu\left(  F\right)
=F\cup\left\{  f\left(  F\right)  \right\}  \succ F$). Thus, the map $\mu$
(or, to be more precise, the pair $\left(  M,\mu\right)  $) is a matching on
$\Delta$. This proves Lemma \ref{lem.pairing2} \textbf{(a)}. \medskip

\textbf{(b)} Assume that Condition \textbf{(C2)} holds as well. Lemma
\ref{lem.pairing2} \textbf{(a)} shows that the map $\mu$ (defined in said
lemma) is well-defined, and that the pair $\left( M, \mu\right) $ is a
matching on $\Delta$.

It remains to show that this matching $\left(  M,\mu\right)  $ is acyclic. In
other words, we must show that it has no cycle.

Assume the contrary. Thus, $\left(  M,\mu\right)  $ has a cycle. By the
definition of a cycle, this cycle is an $n$-tuple $\left(  F_{1},F_{2}%
,\ldots,F_{n}\right)  $ of distinct faces in $M$ such that $n\geq2$ and%
\[
F_{1}\succ\mu\left(  F_{1}\right)  \prec F_{2}\succ\mu\left(  F_{2}\right)
\prec F_{3}\succ\cdots\prec F_{n}\succ\mu\left(  F_{n}\right)  \prec F_{1}.
\]
Consider this $n$-tuple $\left(  F_{1},F_{2},\ldots,F_{n}\right)  $. Set
$F_{n+1}:=F_{1}$. Thus, each $i\in\left\{  1,2,\ldots,n\right\}  $ satisfies
\begin{equation}
F_{i}\succ\mu\left(  F_{i}\right)  \prec F_{i+1}.
\label{pf.lem.pairing2.zigzag}%
\end{equation}
Recall that the $n$ faces $F_{1},F_{2},\ldots,F_{n}$ are distinct; thus,
$F_{1}\neq F_{2}$ (since $n\geq2$).

Now, let $i\in\left\{  1,2,\ldots,n\right\}  $. Then, Claim 2 (applied to
$G=F_{i}$) yields $f\left(  \mu\left(  F_{i}\right)  \right)  =f\left(
F_{i}\right)  $. Furthermore, $\mu\left(  F_{i}\right)  \subseteq F_{i+1}$
(since (\ref{pf.lem.pairing2.zigzag}) yields $\mu\left(  F_{i}\right)  \prec
F_{i+1}$). Thus, Condition \textbf{(C2)} (applied to $F=F_{i+1}$ and
$G=\mu\left(  F_{i}\right)  $) yields $f\left(  F_{i+1}\right)  \leq f\left(
\mu\left(  F_{i}\right)  \right)  =f\left(  F_{i}\right)  $. In other words,
$f\left(  F_{i}\right)  \geq f\left(  F_{i+1}\right)  $.

Forget that we fixed $i$. We thus have proved that $f\left(  F_{i}\right)
\geq f\left(  F_{i+1}\right)  $ for each $i\in\left\{  1,2,\ldots,n\right\}
$. In other words,%
\begin{equation}
f\left(  F_{1}\right)  \geq f\left(  F_{2}\right)  \geq\cdots\geq f\left(
F_{n+1}\right)  . \label{pf.lem.pairing2.chain}%
\end{equation}
Hence, in particular, $f\left(  F_{1}\right)  \geq f\left(  F_{2}\right)  $
and $f\left(  F_{2}\right)  \geq f\left(  F_{n+1}\right)  $. Hence, $f\left(
F_{2}\right)  \geq f\left(  F_{n+1}\right)  =f\left(  F_{1}\right)  $ (since
$F_{n+1}=F_{1}$). Combining this with $f\left(  F_{1}\right)  \geq f\left(
F_{2}\right)  $, we obtain $f\left(  F_{1}\right)  =f\left(  F_{2}\right)  $.

Thus, Claim 4 (applied to $F=F_{1}$ and $G=F_{2}$) yields $F_{1}=F_{2}$ (since
$F_{1}\succ\mu\left(  F_{1}\right)  \prec F_{2}\succ\mu\left(  F_{2}\right)
$). But this contradicts $F_{1}\neq F_{2}$. This contradiction shows that our
assumption was wrong. Hence, we have shown that the matching $\left(
M,\mu\right)  $ has no cycle, i.e., is acyclic. This proves Lemma
\ref{lem.pairing2} \textbf{(b)}.
\end{proof}

\begin{noncompile}
OLD lemmas:

To prove this theorem, we will use the \emph{pairing lemma} of Linusson and
Shareshian (\cite[Lemma 3.4]{LinSha03}):

\begin{lemma}
\label{lem.pairing}Let $V$ be a partially ordered set. Let $\Sigma$ be a
simplicial complex with ground set $V$. Let $P$ be a subset of $\Sigma$. Let
$f:P\rightarrow V$ be a function. Set%
\begin{align*}
P_{-}  &  :=\left\{  F\in P\ \mid\ f\left(  F\right)  \notin F\right\}
\ \ \ \ \ \ \ \ \ \ \text{and}\\
P_{+}  &  :=\left\{  F\in P\ \mid\ f\left(  F\right)  \in F\right\}  .
\end{align*}
For any $F\in P_{-}$, define
\[
F^{+}:=F\cup\left\{  f\left(  F\right)  \right\}  .
\]
For any $G\in P_{+}$, define%
\[
G^{-}:=G\setminus\left\{  f\left(  G\right)  \right\}  .
\]
Assume that the following conditions are satisfied:

\begin{enumerate}
\item[\textbf{(A)}] For any $F\in P_{-}$, we have $F^{+}\in P$.

\item[\textbf{(B)}] For any $F\in P_{-}$, we have $f\left(  F^{+}\right)
=f\left(  F\right)  $.

\item[\textbf{(C)}] For any $G\in P_{+}$ satisfying $G^{-}\in P$, we have
$f\left(  G^{-}\right)  =f\left(  G\right)  $.

\item[\textbf{(D)}] For any $F\in P_{-}$ and any $x\in F$ satisfying
$F^{+}\setminus\left\{  x\right\}  \in P$, we have $f\left(  F\right)  \leq
f\left(  F^{+}\setminus\left\{  x\right\}  \right)  $.
\end{enumerate}

Then, the set $\left\{  \left\{  F^{+},F\right\}  \ \mid\ F\in P_{-}\right\}
$ is an acyclic matching on $P$, and the critical simplices of this matching
are those $G\in P_{+}$ that satisfy $G^{-}\notin P$.
\end{lemma}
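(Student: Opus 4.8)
The plan is to recast the asserted pairing as a matching in the sense of Definition~\ref{def.parmat} and then to run an argument parallel to the proof of Lemma~\ref{lem.pairing2}. I set $M := P_- \cup \set{F^+ : F \in P_-}$ and define $\mu : M \to M$ by $\mu(F) = F^+$ and $\mu(F^+) = F$ for each $F \in P_-$. First I would check well-definedness: Condition~\textbf{(A)} gives $F^+ \in P$, while $f(F^+) = f(F) \in F^+$ (by \textbf{(B)}) shows $F^+ \in P_+$, so $M \subseteq P$. The two ``halves'' of $M$ do not overlap, since any $F \in P_-$ has $f(F) \notin F$ and hence cannot be of the form $H^+$ (which would force $f(F) = f(H) \in F$ via \textbf{(B)}); moreover the lower partner of a given $F^+$ is recovered uniquely as $(F^+)^- = F$, again using \textbf{(B)}. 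Since $F^+ = F \cup \set{f(F)}$ with $f(F) \notin F$, each matched pair satisfies the cover relation $F \prec F^+$, so $(M,\mu)$ is a matching.

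Next I would pin down the matched and critical faces. Every $F \in P_-$ is matched (to $F^+$), so nothing in $P_-$ is critical. For $G \in P_+$ I would show that $G$ is matched if and only if $G^- \in P$: indeed, if $G^- \in P$ then \textbf{(C)} gives $f(G^-) = f(G) \in G$, and since $f(G) \notin G^- = G \setminus \set{f(G)}$ we get $G^- \in P_-$ with $(G^-)^+ = G$, so $G$ is the upper partner of $G^-$; conversely a matched $G \in P_+$ equals some $F^+$ with $F = G^- \in P_- \subseteq P$. Hence the critical faces are exactly the $G \in P_+$ with $G^- \notin P$, as claimed.

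The substantive part, and the one I expect to be the main obstacle, is acyclicity. Suppose for contradiction that $\mu$ has a cycle. Reading off the larger face of each matched pair produces distinct faces $G_1, \dots, G_n \in P_+$ (with $n \ge 2$, indices taken modulo $n$ so that $G_{n+1} = G_1$) satisfying $G_i \succ \mu(G_i) = G_i^- \prec G_{i+1}$; thus $G_{i+1} = G_i^- \cup \set{y_i}$ for some $y_i \notin G_i^-$. A little bookkeeping shows $y_i \neq f(G_i)$ (otherwise $G_{i+1} = G_i$) and $y_i \neq f(G_{i+1})$ (otherwise $G_{i+1}^- = G_i^-$, forcing $G_{i+1} = G_i$ since $\mu$ is an involution); consequently $y_i \in G_{i+1}^-$ and $G_{i+1} \setminus \set{y_i} = G_i^-$. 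Now I would apply Condition~\textbf{(D)} to $F = G_{i+1}^- \in P_-$ and $x = y_i$: here $F^+ = G_{i+1}$ and $F^+ \setminus \set{y_i} = G_i^- \in P$, so \textbf{(D)} yields $f(G_{i+1}^-) \le f(G_i^-)$, which by \textbf{(C)} reads $f(G_{i+1}) \le f(G_i)$. Running this around the cycle gives $f(G_1) \ge f(G_2) \ge \cdots \ge f(G_n) \ge f(G_1)$, so all the $f(G_i)$ coincide, with a common value $s$.

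The cycle is then killed by a direct element chase: fixing any $i$, we have $f(G_{i+1}) = s = f(G_i) \in G_i$ but $f(G_i) \notin G_i^-$, so from $f(G_{i+1}) \in G_{i+1} = G_i^- \cup \set{y_i}$ we are forced to conclude $s = y_i$, contradicting $y_i \neq f(G_i) = s$. This contradiction shows that $\mu$ has no cycle, completing the proof. The whole argument is the same zig-zag used for Lemma~\ref{lem.pairing2}, with Conditions~\textbf{(B)} and \textbf{(C)} supplying the invariance of $f$ along matched pairs and Condition~\textbf{(D)} playing the role that \textbf{(C2)} played there. The only delicate point — hence the main obstacle — is the element-tracking that certifies $y_i \neq f(G_{i+1})$ and $F^+ \setminus \set{y_i} = G_i^-$, which is exactly what makes \textbf{(D)} applicable at the cover $G_i^- \prec G_{i+1}$.
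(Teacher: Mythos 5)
Your proof is correct. A caveat on the comparison: the paper never actually proves this statement itself --- it appears only in a commented-out block and is attributed to Linusson and Shareshian \cite[Lemma 3.4]{LinSha03}; what the paper proves in full is the adapted Lemma~\ref{lem.pairing2}, whose hypotheses \textbf{(C1)} and \textbf{(C2)} are stronger and more symmetric than \textbf{(A)}--\textbf{(D)} here. Your argument follows the same zig-zag strategy as that proof (an involution along which $f$ is constant, a weak decrease of $f$ around any alleged cycle, hence constancy of $f$ on the cycle, then an element chase forcing two consecutive upper faces to coincide), and it correctly supplies the extra bookkeeping that the weaker hypothesis \textbf{(D)} demands: since \textbf{(D)} controls $f$ only on sets of the form $F^{+}\setminus\set{x}$ rather than on arbitrary subfaces as \textbf{(C2)} does, one must identify the element $y_i$ with $G_{i+1}=G_i^{-}\cup\set{y_i}$, check $y_i\neq f(G_{i+1})$ so that $y_i\in G_{i+1}^{-}$, and recognize $G_i^{-}$ as $\left(G_{i+1}^{-}\right)^{+}\setminus\set{y_i}$ before \textbf{(D)} applies --- all of which you do. Your closing contradiction (the common value $s=f(G_i)$ would be forced to equal $y_i$) is the same set-theoretic observation the paper isolates as Claims 3 and 4 in its proof of Lemma~\ref{lem.pairing2}, and your characterization of the critical cells via \textbf{(A)}--\textbf{(C)} is also correct.
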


Note that the sets $P_{-}$ and $P_{+}$ are denoted by $P_{f}$ and $P\setminus
P_{f}$ in \cite[Lemma 3.4]{LinSha03}, but we find our notations more
intuitive. Finally, the notation for matchings in \cite[Lemma 3.4]{LinSha03}
differs from ours, in that it uses directed edges $\left(  F^{+},F\right)  $
instead of undirected edges $\left\{  F^{+},F\right\}  $. Finally, Linusson
and Shareshian, in \cite[Lemma 3.4]{LinSha03}, additionally require the subset
$P$ of $\Sigma$ to be order-convex, but this requirement is not necessary for
their proof.

Lemma \ref{lem.pairing} has the following simple consequence:

\begin{lemma}
\label{lem.pairing.complete}Let $V$, $\Sigma$, $P$, $f$, $P_{-}$, $P_{+}$,
$F^{+}$ and $G^{-}$ be as in Lemma \ref{lem.pairing}. Assume that the
following additional condition is satisfied:

\begin{enumerate}
\item[\textbf{(E)}] For any $G\in P_{+}$, we have $G^{-}\in P$.
\end{enumerate}

Then, the set $\left\{  \left\{  F^{+},F\right\}  \ \mid\ F\in P_{-}\right\}
$ is a complete acyclic matching on $P$ (that is, an acyclic matching on $P$
that has no critical simplices).
\end{lemma}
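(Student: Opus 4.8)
The plan is to deduce Lemma \ref{lem.pairing.complete} directly from Lemma \ref{lem.pairing}, since the former merely imposes the extra hypothesis \textbf{(E)} on top of the hypotheses \textbf{(A)}--\textbf{(D)} already in force, while strengthening only the conclusion. First I would invoke Lemma \ref{lem.pairing} verbatim: under the shared hypotheses \textbf{(A)}--\textbf{(D)}, it already asserts that the set $\left\{  \left\{  F^{+},F\right\}  \ \mid\ F\in P_{-}\right\}$ is an acyclic matching on $P$, and it furthermore identifies the critical simplices of this matching as being precisely those $G\in P_{+}$ satisfying $G^{-}\notin P$. Thus both the matching property and the acyclicity are inherited for free, and the only remaining task is to verify that the matching is \emph{complete}, i.e., that it has no critical simplices.

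For this, I would simply contrast the explicit description of the critical simplices with Condition \textbf{(E)}. Condition \textbf{(E)} states that every $G\in P_{+}$ satisfies $G^{-}\in P$; hence there is no $G\in P_{+}$ with $G^{-}\notin P$. In other words, the set $\left\{  G\in P_{+}\ \mid\ G^{-}\notin P\right\}$ is empty. Combining this with the fact (supplied by Lemma \ref{lem.pairing}) that the critical simplices of our matching are exactly the elements of this set, I conclude that the matching has no critical simplices, and is therefore a complete acyclic matching on $P$.

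There is essentially no obstacle here: the statement is a one-line corollary of its predecessor, and the sole thing to check is that Condition \textbf{(E)} is exactly the negation of the condition $G^{-}\notin P$ that produces critical simplices. The one point worth recording explicitly is that the faces in $P_{-}$ can never be critical, since each $F\in P_{-}$ is matched to $F^{+}$; consequently critical simplices can only occur inside $P_{+}$, which is why ruling them out there (via \textbf{(E)}) already suffices. This observation is, however, already baked into the conclusion of Lemma \ref{lem.pairing}, so no separate argument is required.
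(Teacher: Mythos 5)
Your proposal is correct and matches the paper's intent exactly: the paper states this lemma (in its unused ``OLD lemmas'' block) as ``the following simple consequence'' of Lemma \ref{lem.pairing} without writing out a proof, and your derivation -- inherit the acyclic matching and the characterization of critical simplices from Lemma \ref{lem.pairing}, then observe that Condition \textbf{(E)} empties the set $\left\{ G\in P_{+}\ \mid\ G^{-}\notin P\right\}$ of critical simplices -- is precisely that consequence spelled out. Nothing is missing.
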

\end{noncompile}

\section{The biclique complex of a bipartite relation}

Let $R$ be relation from a set $X$ to a set $Y$. We say that $R$ is a
\emph{bipartite relation} if the sets $X$ and $Y$ are disjoint.
In this section we assume that $R$ is a bipartite relation.

A \emph{biclique} of the relation $R$ shall mean a set of the form $U\cup V$,
where $U\subseteq X$ and $V\subseteq Y$ are two nonempty subsets satisfying
$U\ \mathbf{R}\ V$.
(Note the \textquotedblleft nonempty\textquotedblright\ requirement! Thus, a
biclique must intersect both $X$ and $Y$.)
In this section, we will only deal with bicliques of $R$, so we will just
refer to them as ``bicliques''.

Clearly, any biclique is a subset of $X\cup Y$.
The following facts are near-trivial:

\begin{lemma}
\label{lem.XuY1}Let $U\subseteq X$ and $V\subseteq Y$ be two subsets. Let
$F=U\cup V$. Then, $F\cap X=U$ and $F\cap Y=V$.
\end{lemma}

\begin{vershort}

\begin{proof}
This is because $X$ and $Y$ are disjoint.
\end{proof}
\end{vershort}

\begin{verlong}

\begin{proof}
From $V\subseteq Y$, we obtain $V\cap X\subseteq Y\cap X=X\cap Y=\varnothing$
(since $X$ and $Y$ are disjoint). Thus, $V\cap X=\varnothing$. Furthermore,
from $U\subseteq X$, we obtain $U\cap X=U$.

From $F=U\cup V$, we obtain%
\[
F\cap X=\left(  U\cup V\right)  \cap X=\underbrace{\left(  U\cap X\right)
}_{=U}\cup\underbrace{\left(  V\cap X\right)  }_{=\varnothing}=U\cup
\varnothing=U.
\]
Similarly, $F\cap Y=V$. Thus, Lemma \ref{lem.XuY1} is proved.
\end{proof}
\end{verlong}

\begin{lemma}
\label{lem.biclique.XY}Let $G$ be a subset of a biclique. Then:

\begin{enumerate}
\item[\textbf{(a)}] If $G\subseteq X$, then $G\in C_{X}$.

\item[\textbf{(b)}] If $G\subseteq Y$, then $G\in C_{Y}$.
\end{enumerate}
\end{lemma}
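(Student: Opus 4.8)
The plan is to prove both parts by the same symmetric argument, using only Lemma~\ref{lem.XuY1} together with the nonemptiness clause built into the definition of a biclique. I would prove part \textbf{(a)} in detail; part \textbf{(b)} then follows by interchanging the roles of $X$ and $Y$.

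First I would unpack the hypothesis. By definition, the biclique containing $G$ has the form $U\cup V$, where $U\subseteq X$ and $V\subseteq Y$ are nonempty subsets satisfying $U\ \mathbf{R}\ V$; thus $G\subseteq U\cup V$. Assuming $G\subseteq X$, I would intersect with $X$: since $G\subseteq X$ gives $G=G\cap X\subseteq\left(U\cup V\right)\cap X$, and Lemma~\ref{lem.XuY1} (applied to $F=U\cup V$) gives $\left(U\cup V\right)\cap X=U$, I conclude $G\subseteq U$.

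Next I would dispose of the trivial case: if $G=\varnothing$, then $G$ is $Y$-conic by the very definition of ``$Y$-conic'', so $G\in C_X$. Otherwise, the main point is to exhibit a $Y$-neighbor of $G$, and here the nonemptiness of $V$ is exactly what is needed. Fixing any $y\in V$, the relation $U\ \mathbf{R}\ V$ forces every $u\in U$ to satisfy $u\ R\ y$, so $y$ is a $Y$-neighbor of $U$; since $G\subseteq U$, every $g\in G$ then satisfies $g\ R\ y$, making $y$ a $Y$-neighbor of $G$ as well. Hence $G$ has a $Y$-neighbor and is therefore $Y$-conic, i.e., $G\in C_X$.

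I do not expect any genuine obstacle, as this is a routine verification; the only point worth flagging is that the existence of the witness $y$ relies on $V\neq\varnothing$, which is precisely the nonemptiness requirement in the definition of a biclique (and symmetrically $U\neq\varnothing$ is what drives part \textbf{(b)}). Part \textbf{(b)} is obtained verbatim from the above by swapping $X\leftrightarrow Y$, $U\leftrightarrow V$, and $C_X\leftrightarrow C_Y$, using $\left(U\cup V\right)\cap Y=V$ from Lemma~\ref{lem.XuY1} and picking a witness $x\in U$.
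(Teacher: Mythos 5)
Your proposal is correct and follows essentially the same route as the paper's proof: reduce to $G\subseteq U$ via Lemma~\ref{lem.XuY1}, then use the nonemptiness of $V$ to produce a $Y$-neighbor, with part \textbf{(b)} by symmetry. (Your separate treatment of $G=\varnothing$ is harmless but unnecessary, since the chosen $y\in V$ is vacuously a $Y$-neighbor of the empty set as well.)
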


\begin{proof}
\textbf{(a)} Assume that $G\subseteq X$. We know that $G$ is a subset of a
biclique. Let $F$ be this biclique. Then, $G\subseteq F$.

But $F$ is a biclique. In other words, $F=U\cup V$ for some nonempty subsets
$U\subseteq X$ and $V\subseteq Y$ satisfying $U\ \mathbf{R}\ V$ (by the
definition of a biclique). Consider these $U$ and $V$.

\begin{vershort}
Combining $G\subseteq F$ and $G\subseteq X$, we obtain $G\subseteq F\cap X=U$
(by Lemma \ref{lem.XuY1}).

There exists some $v\in V$ (since $V$ is nonempty). This $v$ is then a
$Y$-neighbor of $U$ (since $U\ \mathbf{R}\ V$), and thus a $Y$-neighbor of $G$
(since $G\subseteq U$). Thus, the set $G$ has a $Y$-neighbor, and hence is
$Y$-conic, i.e., belongs to $C_{X}$. This proves Lemma \ref{lem.biclique.XY}
\textbf{(a)}.
\end{vershort}

\begin{verlong}
From $G\subseteq X$, we obtain $G=\underbrace{G}_{\subseteq F}\cap\,X\subseteq
F\cap X=U$ (by Lemma \ref{lem.XuY1}). Hence, each $u\in G$ is a $u\in U$.

Recall that $V$ is nonempty. Thus, there exists some $v\in V$. Consider this
$v$. Then, each $u\in U$ satisfies $u\ R\ v$ (since $U\ \mathbf{R}\ V$).
Hence, each $u\in G$ satisfies $u\ R\ v$ (since each $u\in G$ is a $u\in U$).
In other words, $v$ is a $Y$-neighbor of $G$. Hence, the set $G$ has a
$Y$-neighbor (namely, $v$). Thus, the set $G$ is $Y$-conic, i.e., belongs to
$C_{X}$ (by the definition of $C_{X}$). In other words, $G\in C_{X}$. This
proves Lemma \ref{lem.biclique.XY} \textbf{(a)}.
\end{verlong}

\textbf{(b)} This is analogous to part \textbf{(a)}.
\end{proof}

\begin{noncompile}

\begin{proposition}
\label{prop.biclique.sub}Any subset of a biclique is either a biclique or a
$Y$-conic set or an $X$-conic set.
\end{proposition}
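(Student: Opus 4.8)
The plan is to reduce everything to the already-proved Lemma~\ref{lem.biclique.XY}, splitting $G$ into its intersections with $X$ and $Y$. Fix a biclique $F$ with $G \subseteq F$, and write $F = U \cup V$ with nonempty subsets $U \subseteq X$ and $V \subseteq Y$ satisfying $U\ \mathbf{R}\ V$. Set $G_X := G \cap X$ and $G_Y := G \cap Y$; since $G \subseteq F \subseteq X \cup Y$, we have $G = G_X \cup G_Y$, and Lemma~\ref{lem.XuY1} (applied to $F$) gives $F \cap X = U$ and $F \cap Y = V$, whence $G_X \subseteq U$ and $G_Y \subseteq V$.

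I would then perform a three-way case split on the emptiness of $G_X$ and $G_Y$. If $G_X = \varnothing$, then $G = G_Y \subseteq Y$, so Lemma~\ref{lem.biclique.XY}~\textbf{(b)} yields $G \in C_Y$, i.e., $G$ is an $X$-conic set. Symmetrically, if $G_Y = \varnothing$, then $G \subseteq X$ and Lemma~\ref{lem.biclique.XY}~\textbf{(a)} shows that $G$ is a $Y$-conic set. (These two branches also handle $G = \varnothing$.)

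In the remaining case both $G_X$ and $G_Y$ are nonempty, and here I would show that $G$ is itself a biclique. Since $U\ \mathbf{R}\ V$ means $u\ R\ v$ for all $u \in U$ and $v \in V$, and since $G_X \subseteq U$ and $G_Y \subseteq V$, it follows that $u\ R\ v$ for all $u \in G_X$ and $v \in G_Y$; that is, $G_X\ \mathbf{R}\ G_Y$. As $G_X \subseteq X$ and $G_Y \subseteq Y$ are nonempty, $G = G_X \cup G_Y$ is a biclique by definition.

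There is no serious obstacle here: the statement dissolves into a case analysis in which each branch is a single application of a lemma already at hand. The only thing to watch is the bookkeeping around the empty set — confirming that the degenerate branches land in the conic complexes (which they do, the empty set being both $X$-conic and $Y$-conic by definition) and that the three cases are genuinely exhaustive.
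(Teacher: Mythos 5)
Your proof is correct and follows essentially the same route the paper takes (in the proof of Proposition~\ref{prop.bicl-comp}, which subsumes this statement): split $G$ into $G\cap X$ and $G\cap Y$, dispatch the cases where one part is empty via Lemma~\ref{lem.biclique.XY}, and observe that $U\ \mathbf{R}\ V$ is inherited by subsets in the remaining case. The bookkeeping around the empty set and the use of Lemma~\ref{lem.XuY1} are handled exactly as in the paper.
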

\end{noncompile}

\begin{proposition}
\label{prop.bicl-comp}Set%
\[
B:=\left\{  \text{bicliques}\right\}  \cup C_{X}\cup C_{Y}.
\]
Then, $B$ is a simplicial complex with ground set $X\cup Y$.
\end{proposition}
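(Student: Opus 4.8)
The plan is to verify the two defining properties of a simplicial complex with ground set $X\cup Y$: first, that every element of $B$ is a subset of $X\cup Y$; and second, that $B$ is closed under taking subsets. The first property is immediate: every biclique is a subset of $X\cup Y$ by construction, every face of $C_X$ is a subset of $X\subseteq X\cup Y$, and every face of $C_Y$ is a subset of $Y\subseteq X\cup Y$. So the whole content of the proposition lies in the closure property, which I would phrase as follows: given $F\in B$ and a subset $G\subseteq F$, I must show $G\in B$.

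To prove closure, I would split according to which of the three pieces $F$ comes from. If $F\in C_X$, then $G\in C_X\subseteq B$ because $C_X$ is already known to be a simplicial complex (hence closed under subsets); the same argument with $C_Y$ settles the case $F\in C_Y$. This reduces everything to the one genuinely new case, where $F$ is a biclique. Here I would write $F=U\cup V$ with $U\subseteq X$ and $V\subseteq Y$ nonempty and $U\ \mathbf{R}\ V$, and decompose $G$ as $G=\left(G\cap X\right)\cup\left(G\cap Y\right)$ (valid since $G\subseteq F\subseteq X\cup Y$).

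The decisive step is then a short case analysis on whether $G$ meets $X$ and $Y$. If $G\cap X=\varnothing$, then $G\subseteq Y$, and since $G$ is a subset of the biclique $F$, Lemma \ref{lem.biclique.XY} \textbf{(b)} gives $G\in C_Y\subseteq B$; symmetrically, if $G\cap Y=\varnothing$, then $G\subseteq X$ and Lemma \ref{lem.biclique.XY} \textbf{(a)} gives $G\in C_X\subseteq B$ (this case also covers $G=\varnothing$). The remaining case, where both $G\cap X$ and $G\cap Y$ are nonempty, is the one requiring an actual verification: I would check that $G$ is itself a biclique. Indeed $G\cap X\subseteq F\cap X=U$ and $G\cap Y\subseteq F\cap Y=V$ by Lemma \ref{lem.XuY1}, both are nonempty, and $\left(G\cap X\right)\ \mathbf{R}\ \left(G\cap Y\right)$ follows from $U\ \mathbf{R}\ V$ since every $u\in G\cap X\subseteq U$ and $v\in G\cap Y\subseteq V$ satisfy $u\ R\ v$; hence $G=\left(G\cap X\right)\cup\left(G\cap Y\right)$ is a biclique and lies in $B$.

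I do not expect any serious obstacle: Lemma \ref{lem.biclique.XY} already does the work in the two "degenerate" cases, and the restriction of $\mathbf{R}$ to subsets is trivial, so the only thing to be careful about is the bookkeeping of the three cases and making sure $G=\varnothing$ is absorbed into the conic cases rather than falsely claimed to be a biclique (recall bicliques are required to meet both $X$ and $Y$). Once all three cases land in $B$, closure is established and the proposition follows.
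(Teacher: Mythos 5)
Your proof is correct and follows essentially the same route as the paper: reduce to the case where $F$ is a biclique, then split a subset $G$ according to whether it lies in $X$, lies in $Y$, or meets both, handling the first two cases via Lemma \ref{lem.biclique.XY} and the third by checking that $G$ is itself a biclique. Your explicit care with $G=\varnothing$ and the use of Lemma \ref{lem.XuY1} to identify $G\cap X\subseteq U$ and $G\cap Y\subseteq V$ match the paper's (longer) version of the argument.
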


\begin{vershort}

\begin{proof}
Let $F\in B$. We must prove that every subset of $F$ belongs to $B$ as well.
If $F\in C_{X}$, then this is clear (since $C_{X}$ is a complex); likewise if
$F\in C_{Y}$. Thus, we only need to consider the case when $F\in\left\{
\text{bicliques}\right\}  $. So let us assume that $F\in\left\{
\text{bicliques}\right\}  $.

Thus, $F$ is a biclique. A subset of $F$ can be either fully contained in $X$,
or fully contained in $Y$, or intersect both $X$ and $Y$. In the first of
these three cases, it belongs to $C_{X}$ (by Lemma \ref{lem.biclique.XY}
\textbf{(a)}). In the second, it belongs to $C_{Y}$ (by Lemma
\ref{lem.biclique.XY} \textbf{(b)}). In the third, it is itself a biclique
(since $U\ \mathbf{R}\ V$ entails $U^{\prime}\ \mathbf{R}\ V^{\prime}$
whenever $U^{\prime}\subseteq U$ and $V^{\prime}\subseteq V$). In either case,
it thus belongs to $B$ (by the definition of $B$). Proposition
\ref{prop.bicl-comp} is thus proven.
\end{proof}
\end{vershort}

\begin{verlong}

\begin{proof}
The definition of $B$ clearly yields $\left\{  \text{bicliques}\right\}
\subseteq B$ and $C_{X}\subseteq B$ and $C_{Y}\subseteq B$. Moreover, $B$ is
clearly a set of subsets of $X\cup Y$.

We must prove that $B$ is a complex with ground set $X\cup Y$. In other words,
we must prove that $G\in B$ whenever $G$ is a subset of a set $F\in B$.

So let $F\in B$ be arbitrary, and let $G$ be a subset of $F$. We must prove
that $G\in B$.

Assume the contrary. Thus, $G\notin B$. Hence, $G\notin C_{X}$ (since $G\in
C_{X}$ would entail $G\in C_{X}\subseteq B$, contradicting $G\notin B$).
Similarly, $G\notin C_{Y}$.

But $C_{X}$ is a complex. Thus, if we had $F\in C_{X}$, then we would have
$G\in C_{X}$ as well (since $G$ is a subset of $F$), which would contradict
$G\notin C_{X}$. Hence, $F\notin C_{X}$. Similarly, $F\notin C_{Y}$.

We have $F\in B=\left\{  \text{bicliques}\right\}  \cup C_{X}\cup C_{Y}$.
Thus, we have $F\in\left\{  \text{bicliques}\right\}  $ or $F\in C_{X}$ or
$F\in C_{Y}$. Hence, we must have $F\in\left\{  \text{bicliques}\right\}  $
(since $F\notin C_{X}$ and $F\notin C_{Y}$). In other words, $F$ is a
biclique. Hence, $G$ is a subset of a biclique (since $G$ is a subset of $F$).

If we had $G\subseteq X$, then Lemma \ref{lem.biclique.XY} \textbf{(a)} would
yield $G\in C_{X}$, which would contradict $G\notin C_{X}$. Thus, we don't
have $G\subseteq X$. Similarly, we don't have $G\subseteq Y$.

The set $F$ is a biclique, and thus has the form $U\cup V$, where $U\subseteq
X$ and $V\subseteq Y$ are two nonempty subsets satisfying $U\ \mathbf{R}\ V$
(since any biclique has this form). Consider these $U$ and $V$. Thus, $F=U\cup
V$.

Set $U^{\prime}:=G\cap U$ and $V^{\prime}:=G\cap V$. Then, $U^{\prime}=G\cap
U\subseteq U\subseteq X$ and similarly $V^{\prime}\subseteq Y$. Since $G$ is a
subset of $F$, we obtain%
\[
G=G\cap\underbrace{F}_{=U\cup V}=G\cap\left(  U\cup V\right)
=\underbrace{\left(  G\cap U\right)  }_{=U^{\prime}}\cup\underbrace{\left(
G\cap V\right)  }_{=V^{\prime}}=U^{\prime}\cup V^{\prime}.
\]
If we had $U^{\prime}=\varnothing$, then we would thus obtain
$G=\underbrace{U^{\prime}}_{=\varnothing}\cup\,V^{\prime}=V^{\prime}\subseteq
Y$, which would contradict the fact that we don't have $G\subseteq Y$. Thus,
we must have $U^{\prime}\neq\varnothing$. Similarly, $V^{\prime}%
\neq\varnothing$. Hence, $U^{\prime}\subseteq X$ and $V^{\prime}\subseteq Y$
are two nonempty subsets. Moreover, $U^{\prime}\subseteq U$ and similarly
$V^{\prime}\subseteq V$. Thus, from $U\ \mathbf{R}\ V$, we obtain $U^{\prime
}\ \mathbf{R}\ V^{\prime}$ (by the definition of the relation $\mathbf{R}%
$\ \ \ \ \footnote{Here are the details of this argument: We have
$U\ \mathbf{R}\ V$. In other words, all $u\in U$ and $v\in V$ satisfy
$u\ R\ v$. Hence, all $u\in U^{\prime}$ and $v\in V^{\prime}$ satisfy
$u\ R\ v$ (since all $u\in U^{\prime}$ and $v\in V^{\prime}$ satisfy $u\in
U^{\prime}\subseteq U$ and $v\in V^{\prime}\subseteq V$, and thus (by the
preceding sentence) they satisfy $u\ R\ v$). In other words, we have
$U^{\prime}\ \mathbf{R}\ V^{\prime}$.}).

Now, we know that the set $G$ has the form $G=U^{\prime}\cup V^{\prime}$,
where $U^{\prime}\subseteq X$ and $V^{\prime}\subseteq Y$ are two nonempty
subsets satisfying $U^{\prime}\ \mathbf{R}\ V^{\prime}$. Thus, $G$ is a
biclique (by the definition of a biclique). In other words, $G\in\left\{
\text{bicliques}\right\}  \subseteq B$. This contradicts $G\notin B$. This
contradiction shows that our assumption was false, and this completes the
proof of Proposition \ref{prop.bicl-comp}.
\end{proof}
\end{verlong}

\section{Collapses of the biclique complex}

In this section, we still assume that $R$ is a bipartite relation.
Consider the simplicial complex $B$ defined in Proposition
\ref{prop.bicl-comp}.
We shall call $B$ the \emph{biclique complex} of the relation $R$.
Both $C_{X}$ and $C_{Y}$ are subcomplexes of $B$.
In this section, we show that the biclique complex $B$
collapses to each of the complexes $C_{X}$ and $C_{Y}$.
This argument will then lead us to what is perhaps the simplest
proofs of Theorem \ref{thm.dowker} and Theorem \ref{thm.barmak}.

Our main claim is the following:

\begin{theorem}
\label{thm.biclique.collapse}
  Let $R$ be a bipartite relation from a set $X$ to a set $Y$.
  Then the biclique complex $B$ of $R$ collapses to both $C_{X}$ and $C_{Y}$.
  That is, $B\searrow C_{X}$ and $B\searrow C_{Y}$.
\end{theorem}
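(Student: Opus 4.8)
The plan is to prove the single collapse $B\searrow C_{X}$; the collapse $B\searrow C_{Y}$ then follows by symmetry, since the complex $B$ is visibly unchanged under swapping the roles of $X$ and $Y$ (equivalently, passing to the transpose relation), while the two subcomplexes $C_{X}$ and $C_{Y}$ get interchanged. To prove $B\searrow C_{X}$, I would invoke Theorem \ref{thm.dmt.collapse}: it suffices to equip $B$ with an acyclic matching $\tup{M,\mu}$ whose matched set is precisely $M=B\setminus C_{X}$.

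First I would describe $M$ explicitly. A biclique is never a subset of $X$ (it meets $Y$), so no biclique lies in $C_{X}$; and the only subset of $Y$ lying in $C_{X}$ is $\varnothing$. Hence
\[
M=B\setminus C_{X}=\set{\text{bicliques}}\cup\tup{C_{Y}\setminus\set{\varnothing}},
\]
i.e.\ $M$ consists of all bicliques together with all nonempty $X$-conic subsets of $Y$. The key observation is that every $F\in M$ has a nonempty $Y$-part $F\cap Y$ that possesses an $X$-neighbor: for a biclique $F=U\cup V$ any $u\in U$ is such a neighbor of $V=F\cap Y$, and for a nonempty $X$-conic set $F\subseteq Y$ this holds by definition.

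Next I would apply Lemma \ref{lem.pairing2}. Fix a total order on the finite set $X$ (and extend it arbitrarily to a total order on the ground set $W=X\cup Y$), and for each $F\in M$ define $f\left(F\right)\in X$ to be the \emph{largest} $X$-neighbor of $F\cap Y$; this is well-defined by the key observation. Since $f\left(F\right)$ lies in $X$ while $F\cap Y\subseteq Y$, adding or removing $f\left(F\right)$ from $F$ never changes $F\cap Y$, and this immediately yields the value-preservation equation \eqref{eq.lem.pairing2.A2} once the membership claims are checked. For Condition \textbf{(C1)} one checks that $F\cup\set{f\left(F\right)}$ is a biclique (its $X$-part $\mathbf{R}$-relates to $V=F\cap Y$ because $f\left(F\right)$ does and, in the biclique case, so does $F\cap X$), and that $F\setminus\set{f\left(F\right)}$ is again in $M$ (either a smaller biclique, or the nonempty $X$-conic set $F\cap Y$ when the $X$-part is exhausted). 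For Condition \textbf{(C2)}, if $G\subseteq F$ then $G\cap Y\subseteq F\cap Y$, so every $X$-neighbor of $F\cap Y$ is also an $X$-neighbor of $G\cap Y$; hence the largest $X$-neighbor can only grow as the $Y$-part shrinks, giving $f\left(F\right)\leq f\left(G\right)$. Lemma \ref{lem.pairing2} then produces an acyclic matching $\tup{M,\mu}$ with matched set $M=B\setminus C_{X}$, and Theorem \ref{thm.dmt.collapse} yields $B\searrow C_{X}$.

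The main obstacle I anticipate is getting Condition \textbf{(C2)} to point the right way: using the \emph{smallest} $X$-neighbor would make $f$ \emph{increase} as the $Y$-part grows, which is exactly the wrong monotonicity, so it is essential to take the \emph{largest} $X$-neighbor. The remaining work is the routine case analysis in \textbf{(C1)} (biclique versus $X$-conic face, and the subcase in which deleting $f\left(F\right)$ empties the $X$-part), which is straightforward but must be carried out in each case.
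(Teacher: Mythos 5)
Your proposal is correct and follows essentially the same route as the paper's own proof: identify $M=B\setminus C_{X}$, show every $F\in M$ has nonempty $F\cap Y$ with an $X$-neighbor, take $f\left(F\right)$ to be the largest such neighbor, verify Conditions \textbf{(C1)} and \textbf{(C2)} of Lemma \ref{lem.pairing2}, and conclude via Theorem \ref{thm.dmt.collapse}. Your explicit description of $M$ and your remark about why the \emph{largest} (rather than smallest) neighbor is needed for the monotonicity in \textbf{(C2)} are both accurate.
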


\begin{proof}
[Proof of Theorem \ref{thm.biclique.collapse}.]We shall only show that
$B\searrow C_{X}$, since the proof of $B\searrow C_{Y}$ is analogous.

Let $M:=B\setminus C_{X}$. This is a subset of $B$, and consists of those
faces of $B$ that are not faces of $C_{X}$. We shall prove some easy claims:

\begin{statement}
\textit{Claim 1:} Let $F\in M$. Then, the set $F\cap Y$ is nonempty and has an
$X$-neighbor.
\end{statement}

\begin{proof}
[Proof of Claim 1.]We have $F\in M=B\setminus C_{X}$, so that $F\in B$ but
$F\notin C_{X}$. From $F\notin C_{X}$, we obtain $F\neq\varnothing$ (since the
definition of $C_{X}$ yields $\varnothing\in C_{X}$), so that $F$ is nonempty.
Moreover,
\[
F\in B=\left\{  \text{bicliques}\right\}  \cup C_{X}\cup C_{Y}.
\]
In other words, $F\in\left\{  \text{bicliques}\right\}  $ or $F\in C_{X}$ or
$F\in C_{Y}$. Since $F\notin C_{X}$, we can rule out the second possibility,
so we conclude that $F\in\left\{  \text{bicliques}\right\}  $ or $F\in C_{Y}$.
Thus, we are in one of the following two cases:

\textit{Case 1:} We have $F\in\left\{  \text{bicliques}\right\}  $.

\textit{Case 2:} We have $F\in C_{Y}$.

\begin{vershort}
Let us consider Case 1. In this case, $F$ is a biclique. In other words,
$F=U\cup V$ for some nonempty subsets $U\subseteq X$ and $V\subseteq Y$
satisfying $U\ \mathbf{R}\ V$ (by the definition of a biclique). Consider
these $U$ and $V$. Any element of $U$ is an $X$-neighbor of $V$ (since
$U\ \mathbf{R}\ V$). Hence, the set $V$ has an $X$-neighbor (since $U$ is nonempty).

So we have shown that the set $V$ is nonempty and has an $X$-neighbor. In
other words, the set $F\cap Y$ is nonempty and has an $X$-neighbor (since
Lemma \ref{lem.XuY1} yields $F\cap Y=V$). This proves Claim 1 in Case 1.
\end{vershort}

\begin{verlong}
Let us consider Case 1. In this case, we have $F\in\left\{  \text{bicliques}%
\right\}  $. In other words, $F$ is a biclique. In other words, $F=U\cup V$
for some nonempty subsets $U\subseteq X$ and $V\subseteq Y$ satisfying
$U\ \mathbf{R}\ V$ (by the definition of a biclique). Consider these $U$ and
$V$. Lemma \ref{lem.XuY1} yields $F\cap X=U$ and $F\cap Y=V$. Moreover, $U$ is
nonempty, so there exists some $x\in U$. Consider this $x$. Then, $x\ R\ v$
for each $v\in V$ (since $x\in U$ and $U\ \mathbf{R}\ V$). In other words, $x$
is an $X$-neighbor of $V$. Hence, the set $V$ has an $X$-neighbor (namely, $x$).

So we have shown that the set $V$ is nonempty and has an $X$-neighbor. In
other words, the set $F\cap Y$ is nonempty and has an $X$-neighbor (since
$F\cap Y=V$). This proves Claim 1 in Case 1.
\end{verlong}

\begin{vershort}
Let us now consider Case 2. In this case, we have $F\in C_{Y}$. In other
words, $F$ is an $X$-conic set. Since $F$ is nonempty, this entails that $F$
has an $X$-neighbor. Moreover, $F\subseteq Y$ (since $F$ is an $X$-conic set),
so that $F\cap Y=F$. Hence, the set $F\cap Y$ is nonempty and has an
$X$-neighbor (since $F$ is nonempty and has an $X$-neighbor). This proves
Claim 1 in Case 2.
\end{vershort}

\begin{verlong}
Let us now consider Case 2. In this case, we have $F\in C_{Y}$. In other
words, $F$ is an $X$-conic set (by the definition of $C_{Y}$). In other words,
$F$ is either empty or has an $X$-neighbor (by the definition of
\textquotedblleft$X$-conic\textquotedblright). Since $F$ is nonempty, this
entails that $F$ has an $X$-neighbor. Moreover, $F\subseteq Y$ (since $F$ is
an $X$-conic set), so that $F\cap Y=F$. Hence, the set $F\cap Y$ is nonempty
and has an $X$-neighbor (since $F$ is nonempty and has an $X$-neighbor). This
proves Claim 1 in Case 2.
\end{verlong}

We have now proved Claim 1 in both Cases 1 and 2. Hence, Claim 1 always holds.
\end{proof}

Let us now fix a total order on the set $X\cup Y$ (chosen arbitrarily). We
define a map $f:M\rightarrow X\cup Y$ as follows: For each face $F\in M$,
\[
\text{we let }f\left(  F\right)  \text{ be the largest }X\text{-neighbor of
the set }F\cap Y
\]
(this is well-defined, since Claim 1 shows that $F\cap Y$ has an
$X$-neighbor). Note that $f\left(  F\right)  $ actually belongs to $X$ (by the
definition of an $X$-neighbor), but we prefer to use $X\cup Y$ as the target
in order to agree with Lemma \ref{lem.pairing2} notationally.

We now claim the following:

\begin{statement}
\textit{Claim 2:} For any $F\in M$, we have $F\cup\left\{  f\left(  F\right)
\right\}  \in M$ and $F\setminus\left\{  f\left(  F\right)  \right\}  \in M$
and%
\[
f\left(  F\cup\left\{  f\left(  F\right)  \right\}  \right)  =f\left(
F\setminus\left\{  f\left(  F\right)  \right\}  \right)  =f\left(  F\right)
.
\]

\end{statement}

\begin{proof}
[Proof of Claim 2.]Let $F\in M$. Thus, $F\in M=B\setminus C_{X}$, so that
$F\in B$ and $F\notin C_{X}$.

By the definition of $f$, the element $f\left(  F\right)  $ is the largest
$X$-neighbor of the set $F\cap Y$. Thus, $f\left(  F\right)  $ is an
$X$-neighbor of $F\cap Y$, so that $f\left(  F\right)  \in X$ (by the
definition of an $X$-neighbor), and thus $f\left(  F\right)  \notin Y$ (since
the sets $X$ and $Y$ are disjoint).

Set%
\[
F^{+}:=F\cup\left\{  f\left(  F\right)  \right\}
\ \ \ \ \ \ \ \ \ \ \text{and}\ \ \ \ \ \ \ \ \ \ F^{-}:=F\setminus\left\{
f\left(  F\right)  \right\}  .
\]

\begin{vershort}
Thus, the sets $F^{+}$ and $F^{-}$ differ from $F$ only in the single element
$f\left(  F\right)  $ (if they differ from $F$ at all). Thus, they contain the
same elements of $Y$ as $F$ (since $f\left(  F\right)  \notin Y$). In other
words,%
\[
F^{+}\cap Y=F\cap Y\ \ \ \ \ \ \ \ \ \ \text{and}\ \ \ \ \ \ \ \ \ \ F^{-}\cap
Y=F\cap Y.
\]

\end{vershort}

\begin{verlong}
From $F^{+}=F\cup\left\{  f\left(  F\right)  \right\}  $, we obtain%
\[
F^{+}\cap Y=\left(  F\cup\left\{  f\left(  F\right)  \right\}  \right)  \cap
Y=\left(  F\cap Y\right)  \cup\underbrace{\left(  \left\{  f\left(  F\right)
\right\}  \cap Y\right)  }_{\substack{=\varnothing\\\text{(since }f\left(
F\right)  \notin Y\text{)}}}=F\cap Y.
\]
From $F^{-}=F\setminus\left\{  f\left(  F\right)  \right\}  $, we obtain%
\[
F^{-}\cap Y=\left(  F\setminus\left\{  f\left(  F\right)  \right\}  \right)
\cap Y=F\cap\underbrace{\left(  Y\setminus\left\{  f\left(  F\right)
\right\}  \right)  }_{\substack{=Y\\\text{(since }f\left(  F\right)  \notin
Y\text{)}}}=F\cap Y.
\]

\end{verlong}

Claim 1 shows that the set $F\cap Y$ is nonempty. Hence, the set $F^{-}\cap Y$
is nonempty (since $F^{-}\cap Y=F\cap Y$). In other words, the set $F^{-}$
intersects $Y$. Thus, $F^{-}$ is not a subset of $X$ (since the sets $X$ and
$Y$ are disjoint). Thus, $F^{-}\notin C_{X}$ (since any face of $C_{X}$ is a
subset of $X$). Moreover, $F^{-}=F\setminus\left\{  f\left(  F\right)
\right\}  \subseteq F$. Thus, from $F\in B$, we obtain $F^{-}\in B$ (since $B$
is a simplicial complex). Combining this with $F^{-}\notin C_{X}$, we obtain
\[
F^{-}\in B\setminus C_{X}=M.
\]

We have $F\subseteq F\cup\left\{  f\left(  F\right)  \right\}  =F^{+}$. Thus,
if we had $F^{+}\in C_{X}$, then we would obtain $F\in C_{X}$ (since $C_{X}$
is a simplicial complex), which would contradict $F\notin C_{X}$. Hence,
$F^{+}\notin C_{X}$.

Let us now show that $F^{+}\in B$. As in the proof of Claim 1, we can show
that $F\in\left\{  \text{bicliques}\right\}  $ or $F\in C_{Y}$. Thus, we are
in one of the following two cases:

\textit{Case 1:} We have $F\in\left\{  \text{bicliques}\right\}  $.

\textit{Case 2:} We have $F\in C_{Y}$.

\begin{vershort}
Let us consider Case 1. In this case, $F$ is a biclique. In other words,
$F=U\cup V$ for some nonempty subsets $U\subseteq X$ and $V\subseteq Y$
satisfying $U\ \mathbf{R}\ V$ (by the definition of a biclique). Consider
these $U$ and $V$. From Lemma \ref{lem.XuY1}, we obtain $F\cap X=U$ and $F\cap
Y=V$. Moreover, the set $U\cup\left\{  f\left(  F\right)  \right\}  $ is
clearly nonempty and satisfies $U\cup\left\{  f\left(  F\right)  \right\}
\subseteq X$ (since $U\subseteq X$ and $f\left(  F\right)  \in X$).
\end{vershort}

\begin{verlong}
Let us consider Case 1. In this case, we have $F\in\left\{  \text{bicliques}%
\right\}  $. In other words, $F$ is a biclique. In other words, $F=U\cup V$
for some nonempty subsets $U\subseteq X$ and $V\subseteq Y$ satisfying
$U\ \mathbf{R}\ V$ (by the definition of a biclique). Consider these $U$ and
$V$. From Lemma \ref{lem.XuY1}, we obtain $F\cap X=U$ and $F\cap Y=V$.
Moreover, the set $U\cup\left\{  f\left(  F\right)  \right\}  $ is nonempty
(since $U$ is nonempty) and satisfies $U\cup\left\{  f\left(  F\right)
\right\}  \subseteq X$ (since $U\subseteq X$ and $f\left(  F\right)  \in X$).
\end{verlong}

\begin{vershort}
But $f\left(  F\right)  $ is an $X$-neighbor of $F\cap Y$. In other words,
$f\left(  F\right)  $ is an $X$-neighbor of $V$ (since $F\cap Y=V$). Moreover,
each element of $U$ is an $X$-neighbor of $V$ (since $U\ \mathbf{R}\ V$).
Combining the preceding two sentences, we conclude that each element of
$U\cup\left\{  f\left(  F\right)  \right\}  $ is an $X$-neighbor of $V$. In
other words, $\left(  U\cup\left\{  f\left(  F\right)  \right\}  \right)
\ \mathbf{R}\ V$. Hence, the set $\left(  U\cup\left\{  f\left(  F\right)
\right\}  \right)  \cup V$ is a biclique (by the definition of a biclique,
since $U\cup\left\{  f\left(  F\right)  \right\}  \subseteq X$ and $V\subseteq
Y$ are nonempty subsets satisfying $\left(  U\cup\left\{  f\left(  F\right)
\right\}  \right)  \ \mathbf{R}\ V$). Now,
\begin{align*}
F^{+}  &  =\underbrace{F}_{=U\cup V}\cup\left\{  f\left(  F\right)  \right\}
=U\cup V\cup\left\{  f\left(  F\right)  \right\}  =\left(  U\cup\left\{
f\left(  F\right)  \right\}  \right)  \cup V\\
&  \in\left\{  \text{bicliques}\right\}  \ \ \ \ \ \ \ \ \ \ \left(
\text{since }\left(  U\cup\left\{  f\left(  F\right)  \right\}  \right)  \cup
V\text{ is a biclique}\right) \\
&  \subseteq B\ \ \ \ \ \ \ \ \ \ \left(  \text{since }B=\left\{
\text{bicliques}\right\}  \cup C_{X}\cup C_{Y}\right)  .
\end{align*}
Thus, we have proved $F^{+}\in B$ in Case 1.
\end{vershort}

\begin{verlong}
But $f\left(  F\right)  $ is an $X$-neighbor of $F\cap Y$. In other words,
$f\left(  F\right)  $ is an $X$-neighbor of $V$ (since $F\cap Y=V$). In other
words,%
\begin{equation}
f\left(  F\right)  \ R\ v\ \ \ \ \ \ \ \ \ \ \text{for each }v\in V.
\label{pf.thm.biclique.collapse.c2.pf.3}%
\end{equation}
Moreover,%
\begin{equation}
u\ R\ v\ \ \ \ \ \ \ \ \ \ \text{for each }u\in U\text{ and }v\in V
\label{pf.thm.biclique.collapse.c2.pf.4}%
\end{equation}
(since $U\ \mathbf{R}\ V$). Combining the previous two sentences, we conclude
that
\[
u\ R\ v\ \ \ \ \ \ \ \ \ \ \text{for each }u\in U\cup\left\{  f\left(
F\right)  \right\}  \text{ and }v\in V
\]
(indeed, this follows from (\ref{pf.thm.biclique.collapse.c2.pf.3}) if
$u=f\left(  F\right)  $, and otherwise follows from
(\ref{pf.thm.biclique.collapse.c2.pf.4}) because $u\in U\cup\left\{  f\left(
F\right)  \right\}  $ and $u\neq f\left(  F\right)  $ imply $u\in U$). In
other words, $\left(  U\cup\left\{  f\left(  F\right)  \right\}  \right)
\ \mathbf{R}\ V$. Hence, the set $\left(  U\cup\left\{  f\left(  F\right)
\right\}  \right)  \cup V$ is a biclique (by the definition of a biclique,
since $U\cup\left\{  f\left(  F\right)  \right\}  \subseteq X$ and $V\subseteq
Y$ are nonempty subsets satisfying $\left(  U\cup\left\{  f\left(  F\right)
\right\}  \right)  \ \mathbf{R}\ V$). Now,
\begin{align*}
F^{+}  &  =\underbrace{F}_{=U\cup V}\cup\left\{  f\left(  F\right)  \right\}
=U\cup V\cup\left\{  f\left(  F\right)  \right\}  =\left(  U\cup\left\{
f\left(  F\right)  \right\}  \right)  \cup V\\
&  \in\left\{  \text{bicliques}\right\}  \ \ \ \ \ \ \ \ \ \ \left(
\text{since }\left(  U\cup\left\{  f\left(  F\right)  \right\}  \right)  \cup
V\text{ is a biclique}\right) \\
&  \subseteq B\ \ \ \ \ \ \ \ \ \ \left(  \text{since }B=\left\{
\text{bicliques}\right\}  \cup C_{X}\cup C_{Y}\right)  .
\end{align*}
Thus, we have proved $F^{+}\in B$ in Case 1.
\end{verlong}

Let us now consider Case 2. In this case, we have $F\in C_{Y}$. Hence,
$F\subseteq Y$ (since $C_{Y}$ is a complex with ground set $Y$). Thus, $F\cap
Y=F$. Moreover, $F$ is nonempty (as we have already shown in the proof of
Claim 1). But $f\left(  F\right)  $ is an $X$-neighbor of $F\cap Y$. In other
words, $f\left(  F\right)  $ is an $X$-neighbor of $F$ (since $F\cap Y=F$). In
other words, $f\left(  F\right)  \in X$ and $\left\{  f\left(  F\right)
\right\}  \ \mathbf{R}\ F$ (by the definition of an $X$-neighbor). From
$f\left(  F\right)  \in X$, we obtain $\left\{  f\left(  F\right)  \right\}
\subseteq X$. Hence, $\left\{  f\left(  F\right)  \right\}  \cup F$ is a
biclique (by the definition of \textquotedblleft biclique\textquotedblright,
since $\left\{  f\left(  F\right)  \right\}  \subseteq X$ and $F\subseteq Y$
are nonempty subsets satisfying $\left\{  f\left(  F\right)  \right\}
\ \mathbf{R}\ F$). Now,%
\begin{align*}
F^{+}  &  =F\cup\left\{  f\left(  F\right)  \right\}  =\left\{  f\left(
F\right)  \right\}  \cup F\\
&  \in\left\{  \text{bicliques}\right\}  \ \ \ \ \ \ \ \ \ \ \left(
\text{since }\left\{  f\left(  F\right)  \right\}  \cup F\text{ is a
biclique}\right) \\
&  \subseteq B\ \ \ \ \ \ \ \ \ \ \left(  \text{since }B=\left\{
\text{bicliques}\right\}  \cup C_{X}\cup C_{Y}\right)  .
\end{align*}
Thus, we have proved $F^{+}\in B$ in Case 2.

Now we have proved $F^{+}\in B$ in both Cases 1 and 2. Thus, $F^{+}\in B$
always holds. Combined with $F^{+}\notin C_{X}$, this yields
\[
F^{+}\in B\setminus C_{X}=M.
\]

The definition of $f$ shows that $f\left(  F^{+}\right)  $ is the largest
$X$-neighbor of the set $F^{+}\cap Y$, whereas $f\left(  F\right)  $ is the
largest $X$-neighbor of the set $F\cap Y$. Since $F^{+}\cap Y=F\cap Y$, these
two descriptions of $f\left(  F^{+}\right)  $ and $f\left(  F\right)  $ are
identical, so we conclude that $f\left(  F^{+}\right)  =f\left(  F\right)  $.
Similarly, from $F^{-}\cap Y=F\cap Y$, we obtain $f\left(  F^{-}\right)
=f\left(  F\right)  $. Combining this with $f\left(  F^{+}\right)  =f\left(
F\right)  $, we obtain $f\left(  F^{+}\right)  =f\left(  F^{-}\right)
=f\left(  F\right)  $.

Altogether, we have now shown that $F^{+}\in M$ and $F^{-}\in M$ and $f\left(
F^{+}\right)  =f\left(  F^{-}\right)  =f\left(  F\right)  $. In view of
$F^{+}=F\cup\left\{  f\left(  F\right)  \right\}  $ and $F^{-}=F\setminus
\left\{  f\left(  F\right)  \right\}  $, we can rewrite this as follows: We
have $F\cup\left\{  f\left(  F\right)  \right\}  \in M$ and $F\setminus
\left\{  f\left(  F\right)  \right\}  \in M$ and%
\[
f\left(  F\cup\left\{  f\left(  F\right)  \right\}  \right)  =f\left(
F\setminus\left\{  f\left(  F\right)  \right\}  \right)  =f\left(  F\right)
.
\]
Thus, Claim 2 is proved.
\end{proof}

\begin{statement}
\textit{Claim 3:} For any $F\in M$ and any $G\in M$ satisfying $G\subseteq F$,
we have $f\left(  F\right)  \leq f\left(  G\right)  $.
\end{statement}

\begin{proof}
[Proof of Claim 3.]Let $F\in M$ and $G\in M$ satisfy $G\subseteq F$. From
$G\subseteq F$, we obtain $G\cap Y\subseteq F\cap Y$.

\begin{vershort}
Recall that $f\left(  F\right)  $ is the largest $X$-neighbor of the set
$F\cap Y$. Thus, $f\left(  F\right)  $ is an $X$-neighbor of $F\cap Y$. Hence,
$f\left(  F\right)  $ is an $X$-neighbor of $G\cap Y$ as well (since $G\cap
Y\subseteq F\cap Y$).
\end{vershort}

\begin{verlong}
Recall that $f\left(  F\right)  $ is the largest $X$-neighbor of the set
$F\cap Y$. Thus, $f\left(  F\right)  $ is an $X$-neighbor of $F\cap Y$. In
other words, $f\left(  F\right)  \in X$, and%
\begin{equation}
\text{each }y\in F\cap Y\text{ satisfies }f\left(  F\right)  \ R\ y.
\label{pf.thm.biclique.collapse.c3.pf.1}%
\end{equation}

Therefore, each $y\in G\cap Y$ satisfies $f\left(  F\right)  \ R\ y$ as well
(since $y\in G\cap Y$ entails $y\in G\cap Y\subseteq F\cap Y$, and thus
(\ref{pf.thm.biclique.collapse.c3.pf.1}) yields $f\left(  F\right)  \ R\ y$).
In other words, $f\left(  F\right)  $ is an $X$-neighbor of $G\cap Y$ (since
$f\left(  F\right)  \in X$).
\end{verlong}

But $f\left(  G\right)  $ is the \textbf{largest} $X$-neighbor of the set
$G\cap Y$ (by the definition of $f$). Hence, $f\left(  G\right)  \geq x$
whenever $x$ is any $X$-neighbor of $G\cap Y$. Applying this to $x=f\left(
F\right)  $, we obtain $f\left(  G\right)  \geq f\left(  F\right)  $ (since
$f\left(  F\right)  $ is an $X$-neighbor of $G\cap Y$). In other words,
$f\left(  F\right)  \leq f\left(  G\right)  $. This proves Claim 3.
\end{proof}

Now, we can apply both parts \textbf{(a)} and \textbf{(b)} of Lemma
\ref{lem.pairing2} to $\Delta=B$ and $W=X\cup Y$ (since Claim 2 shows that
condition \textbf{(C1)} in Lemma \ref{lem.pairing2} holds, whereas Claim 3
shows that condition \textbf{(C2)} holds). Thus, we conclude that the map%
\begin{align*}
\mu:M  &  \rightarrow M,\\
F  &  \mapsto%
\begin{cases}
F\setminus\left\{  f\left(  F\right)  \right\}  , & \text{if }f\left(
F\right)  \in F;\\
F\cup\left\{  f\left(  F\right)  \right\}  , & \text{if }f\left(  F\right)
\notin F
\end{cases}
\end{align*}
is well-defined (by Lemma \ref{lem.pairing2} \textbf{(a)}), and the pair
$\left(  M,\mu\right)  $ is a matching on $B$ (by Lemma \ref{lem.pairing2}
\textbf{(a)} again), and that this matching $\left(  M,\mu\right)  $ is
acyclic (by Lemma \ref{lem.pairing2} \textbf{(b)}). Hence, there exists an
acyclic matching $\left(  M,\mu\right)  $ on $B$ with $M=B\setminus C_{X}$.
Thus, Theorem \ref{thm.dmt.collapse} (applied to $\Delta=B$ and $\Gamma=C_{X}%
$) shows that $B\searrow C_{X}$. As we said, this completes the proof of
Theorem \ref{thm.biclique.collapse}.
\end{proof}

\section{Functoriality and Dowker's theorem}

In this section, we will prove Dowker's and Barmak's theorems
(Theorems \ref{thm.dowker} and \ref{thm.barmak}).
The hard work has already been done in the preceding sections:
In fact, Theorem \ref{thm.barmak} follows immediately from
Theorem \ref{thm.biclique.collapse} if the sets $X$ and $Y$ are disjoint.
The only thing that remains to be done is reducing the general case
to this disjoint case.
Roughly speaking, this can be done by ``renaming'' the elements of
$X$ and $Y$, after one shows (Lemma~\ref{lem.isomorphic.simple.homotopy.type}
below) that isomorphic simplicial complexes are simple-homotopy-equivalent.
(This basic fact does not seem to appear explicitly in the literature,
so we prove it below -- using Theorem \ref{thm.biclique.collapse}
in fact!)

We shall now present this argument in some detail.
We begin by introducing some categories:

\begin{enumerate}

\item The \emph{category of simplicial complexes}:
The objects of this category are the simplicial complexes.
Its morphisms are the \emph{simplicial maps}, defined as
follows:
The \emph{minimal ground set} of a complex $\Delta$ is
defined to be the union of all faces of $\Delta$.
A \emph{simplicial map} from a complex $\Delta$ to a
complex $\Gamma$ means a map $f$ from the minimal ground
set of $\Delta$ to the minimal ground set of $\Gamma$
such that every face $F$ of $\Delta$ satisfies
$f\tup{F} \in \Gamma$ (where $f\tup{F}$ denotes
$\set{f\tup{x} \mid x \in F}$ as usual).
Composition of simplicial maps is just regular composition
of maps.

\item The \emph{category of relations}:
Given two relations $R \subseteq X \times Y$ and $S \subseteq Z \times W$,
a \emph{morphism of relations} $\phi \colon R \to S$ is a pair $\phi = (\phi_l,
\phi_r)$ of maps
$\phi_l \colon X \to Z$ and $\phi_r \colon Y \to W$ such that
each $(x,y) \in R$ satisfies $(\phi_l(x), \phi_r(y)) \in S$.
Thus, we can define the \emph{category of relations}: a category whose
objects are binary relations (more precisely, triples $\tup{X, Y, R}$
where $R \subseteq X \times Y$ is a relation), and whose morphisms are
morphisms of relations.
Composition of morphisms is defined componentwise
(i.e., by setting $\tup{\phi_l, \phi_r} \circ \tup{\psi_l, \psi_r}
= \tup{\phi_l \circ \psi_l, \phi_r \circ \psi_r}$),
and the identity morphism of a given relation $R \subseteq X \times Y$
is the pair $(\id_X, \id_Y)$.

\item The \emph{category of bipartite relations}:
Recall that a relation $R$ from a set $X$ to a set $Y$ is called a
bipartite relation if the sets $X$ and $Y$ are disjoint.
The \emph{category of bipartite relations} is the full subcategory
of the category of relations whose objects are the bipartite relations.

\end{enumerate}

Given a relation $R \subseteq X \times Y$, we let
$C_l(R) = C_X$ and $C_r(R) = C_Y$ be the left and right Dowker complexes of $R$.

If $R$ is a bipartite relation, then we furthermore let $B(R)$ be the
biclique complex of $R$ (formerly denoted $B$,
defined in Proposition \ref{prop.bicl-comp}).

We have seen that $C_l(R) = C_X$ and $C_r(R) = C_Y$ are subcomplexes of $B(R) = B$.
We will write $i_l^R \colon C_l(R) \to B(R)$ and $i_r^R \colon C_r(R) \to B(R)$
for the respective inclusion maps.

Given a morphism $\phi = (\phi_l, \phi_r)\colon R \to S$ of relations, the maps $\phi_l$ and $\phi_r$ induce
simplicial maps $C_l(\phi) \colon C_l(R) \to C_l(S)$ and $C_r(\phi) \colon C_r(R) \to C_r(S)$
of simplicial complexes.
(For instance, $C_l(\phi)$ replaces each vertex of $C_l(R)$ by its image under $\phi_l$.)
The assignments $R \mapsto C_l(R)$ and $R \mapsto C_r(R)$
thus define functors from the category of relations to the category of simplicial complexes.
If $R$ and $S$ are bipartite relations, then the morphism $\phi_l \cup \phi_r$ induces a
simplicial map $B(\phi) \colon B(R) \to B(S)$ of simplicial complexes.
This makes $R \mapsto B(R)$ a functor from the category of bipartite relations to the category
of simplicial complexes.
The inclusion maps $i_l^R \colon C_l(R) \to B(R)$ and $i_r^R \colon C_r(R) \to B(R)$
then form natural transformations of functors.


As a last preparation for the proof of Theorem \ref{thm.barmak}, we now show that isomorphic
simplicial complexes are simple-homotopy-equivalent.

\begin{lemma}
  \label{lem.isomorphic.simple.homotopy.type}
  Let $\Delta$ and $\Delta'$ be isomorphic simplicial complexes. Then $\Delta$ and $\Delta'$ are
  simple-homotopy-equivalent.
\end{lemma}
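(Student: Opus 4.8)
The plan is to connect $\Delta$ and $\Delta'$ through a common auxiliary complex, applying Theorem \ref{thm.biclique.collapse} twice. One might first hope to find a single bipartite relation whose two Dowker complexes are exactly $\Delta$ and $\Delta'$; but this cannot work in general, since the two Dowker complexes of a relation need not be isomorphic (compare a full simplex with its nerve). Instead, I would realize $\Delta$ and $\Delta'$ \emph{each} as \emph{one} of the two Dowker complexes of a bipartite relation whose \emph{other} Dowker complex is, in both cases, one and the same complex $N$. Since Theorem \ref{thm.biclique.collapse} shows that the biclique complex of each relation collapses to both of its Dowker complexes, it will follow that $\Delta$ and $N$ are simple-homotopy-equivalent, and likewise $N$ and $\Delta'$; as simple-homotopy-equivalence is an equivalence relation, this finishes the proof.

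To set this up, let $V$ and $V'$ be the minimal ground sets of $\Delta$ and $\Delta'$, and let $\phi\colon V\to V'$ be the bijection underlying the isomorphism (so that $F\in\Delta \iff \phi\tup{F}\in\Delta'$). Choose a set $P$ disjoint from $V\cup V'$ together with a bijection $\beta\colon P\to\Delta\setminus\set{\varnothing}$ onto the nonempty faces of $\Delta$. Define a bipartite relation $R_0$ from $P$ to $V$ by declaring $p\mathrel{R_0}v$ if and only if $v\in\beta\tup{p}$. Unwinding the definitions gives $C_V=\Delta$ (a subset $U\subseteq V$ has a common neighbor in $P$ exactly when it is contained in some nonempty face, i.e.\ when $U\in\Delta$) and $C_P=N$, where $N:=\set{\mathcal P\subseteq P \;\mid\; \mathcal P=\varnothing \text{ or } \bigcap_{p\in\mathcal P}\beta\tup{p}\neq\varnothing}$ records which families of faces of $\Delta$ admit a common vertex. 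Theorem \ref{thm.biclique.collapse} then yields $B(R_0)\searrow C_V=\Delta$ and $B(R_0)\searrow C_P=N$, so $\Delta$ and $N$ are simple-homotopy-equivalent.

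For the second relation I would keep the \emph{same} $P$ and $\beta$, and define a bipartite relation $R_1$ from $P$ to $V'$ by $p\mathrel{R_1}v'$ if and only if $\phi^{-1}\tup{v'}\in\beta\tup{p}$. The analogous computation shows $C_{V'}=\Delta'$ (now using $\phi^{-1}\tup{W}\in\Delta \iff W\in\Delta'$). The decisive point is that the other Dowker complex $C_P$ of $R_1$ is again \emph{literally} $N$: a family $\mathcal P\subseteq P$ has a common neighbor $v'\in V'$ precisely when $\phi^{-1}\tup{v'}\in\bigcap_{p\in\mathcal P}\beta\tup{p}$, and as $v'$ ranges over $V'$ the element $\phi^{-1}\tup{v'}$ ranges over all of $V$ (because $\phi$ is a bijection), so this condition is exactly $\bigcap_{p\in\mathcal P}\beta\tup{p}\neq\varnothing$. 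Hence Theorem \ref{thm.biclique.collapse} gives that $\Delta'$ and $N$ are simple-homotopy-equivalent, and combining with the previous paragraph we conclude that $\Delta$ and $\Delta'$ are simple-homotopy-equivalent.

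The crux -- and essentially the only place any care is needed -- is arranging that the pivot complex $N$ produced by the two relations is literally the \emph{same} complex on the same ground set $P$, rather than merely isomorphic (which would render the argument circular). This is why I reuse a single pair $\tup{P,\beta}$, and why the bijectivity of $\phi$ is essential in identifying the two copies of $C_P$. Note that only the disjointness of $P$ from $V$ and from $V'$ is required (so that $R_0$ and $R_1$ are bipartite); the sets $V$ and $V'$ themselves need not be disjoint, so no separate ``renaming'' reduction is necessary. Finally, the degenerate cases (such as $\Delta=\set{\varnothing}$, where $P=\varnothing$) are immediate and can be dispatched in a line.
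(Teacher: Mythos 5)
Your proposal is correct and follows essentially the same route as the paper's own proof: both realize $\Delta$ and $\Delta'$ as one Dowker complex of a vertex--face incidence relation indexed by a common auxiliary set of faces, observe that the other Dowker complex (your $N$, the paper's $C_r(R)=C_r(R')$) is literally identical for the two relations, and then apply Theorem \ref{thm.biclique.collapse} twice and chain the equivalences. The only differences are cosmetic (you omit $\varnothing$ from the indexing bijection and verify the equality of the pivot complexes by direct computation rather than via an isomorphism of relations acting as the identity on the second factor).
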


\begin{proof}
\begin{vershort}
We WLOG assume that $\Delta$ and $\Delta^{\prime}$ are nonempty (since
otherwise, $\Delta=\Delta^{\prime}$, and the claim is obvious). Hence, the
empty set $\varnothing$ is both a face of $\Delta$ and a face of
$\Delta^{\prime}$.
\end{vershort}

\begin{verlong}
We WLOG assume that $\Delta$ and $\Delta^{\prime}$ are nonempty (since
otherwise, the isomorphy of $\Delta$ and $\Delta^{\prime}$ forces
\textbf{both} $\Delta$ and $\Delta^{\prime}$ to be empty, so that we have
$\Delta=\Delta^{\prime}$, and our claim is obvious). Hence, the empty set
$\varnothing$ is both a face of $\Delta$ and a face of $\Delta^{\prime}$.
\end{verlong}

Let $X$ and $X^{\prime}$ be the minimal ground sets of $\Delta$ and
$\Delta^{\prime}$. Let $\alpha:X\rightarrow X^{\prime}$ be the isomorphism
from $\Delta$ to $\Delta^{\prime}$. Let $\alpha_{\ast}:\Delta\rightarrow
\Delta^{\prime}$ be the map that sends each face $F$ of $\Delta$ to the face
$\alpha\left(  F\right)  =\left\{  \alpha\left(  f\right)  \ \mid\ f\in
F\right\}  $ of $\Delta^{\prime}$. This map $\alpha_{\ast}$ is a bijection
(since $\alpha$ is an isomorphism of complexes).

We refer to the elements of $X$ as the \emph{vertices} of $\Delta$. Likewise
for $X^{\prime}$ and $\Delta^{\prime}$.

Pick a set $Y$ that is disjoint from both $X$ and $X^{\prime}$ and has the
same size as $\Delta$. Pick a bijection $f\colon Y\rightarrow\Delta$. Thus,
$f$ assigns a face $f\left(  y\right)  $ of $\Delta$ to each $y\in Y$. Let
$f^{\prime}:Y\rightarrow\Delta^{\prime}$ be the composition $\alpha_{\ast
}\circ f$ of the bijections $f:Y\rightarrow\Delta$ and $\alpha_{\ast}%
:\Delta\rightarrow\Delta^{\prime}$.
Thus, $f^{\prime}$ is itself a bijection (since a composition of two
bijections is a bijection), and sends each $y\in Y$ to $\alpha_{\ast}\left(
f\left(  y\right)  \right)  =\alpha\left(  f\left(  y\right)  \right)  $ (by
the definition of $\alpha_{\ast}$).

Now, define a relation $R$ from $X$ to $Y$ by
\[
R=\left\{  \left(  x,y\right)  \in X\times Y\mid x\in f\left(  y\right)
\right\}  .
\]
Thus, a pair $\left(  x,y\right)  \in X\times Y$ belongs to $R$ if and only if
$x\in f\left(  y\right)  $. In other words, $R$ is the containment relation
between the vertices and the faces of $\Delta$, except that the faces have
been relabelled using $f$.

\begin{vershort}
The definition of the left Dowker complex $C_{l}\left(  R\right)  $ shows that%
\begin{align*}
C_{l}\left(  R\right)    & =\left\{  Y\text{-conic sets with respect to the
relation }R\right\}  \\
& =\left\{  U\subseteq X\ \mid\ U\text{ is empty or has a }Y\text{-neighbor}%
\right\}  \\
& =\left\{  U\subseteq X\ \mid\ U\text{ is empty or there exists some }y\in
Y\right.  \\
& \qquad\qquad\left.  \text{such that all }u\in U\text{ satisfy }%
u\ R\ y\right\}  \\
& \qquad\qquad\left(  \text{by the definition of a }Y\text{-neighbor}\right)
\\
& =\left\{  U\subseteq X\ \mid\ U\text{ is empty or there exists some }y\in
Y\right.  \\
& \qquad\qquad\left.  \text{such that all }u\in U\text{ satisfy }u\in f\left(
y\right)  \right\}  \\
& \qquad\qquad\left(  \text{by the definition of }R\right)  \\
& =\left\{  U\subseteq X\ \mid\ U\text{ is empty or there exists some }y\in
Y\right.  \\
& \qquad\qquad\left.  \text{such that }U\subseteq f\left(  y\right)  \right\}
\\
& =\left\{  U\subseteq X\ \mid\ U\text{ is empty or there exists some }%
F\in\Delta\right.  \\
& \qquad\qquad\left.  \text{such that }U\subseteq F\right\}  \qquad\left(
\text{since }f:Y\rightarrow\Delta\text{ is a bijection}\right)  \\
& =\left\{  U\subseteq X\ \mid\ U\text{ is empty or a subset of some face
}F\in\Delta\right\}  \\
& =\Delta
\end{align*}
(since $\varnothing$ is a face of $\Delta$, and since any subset of a face of
$\Delta$ is itself a face of $\Delta$).
\end{vershort}

\begin{verlong}
The definition of the left Dowker complex $C_{l}\left(  R\right)  $ shows that%
\[
C_{l}\left(  R\right)  =\left\{  Y\text{-conic sets with respect to the
relation }R\right\}  .
\]
Recall furthermore that $\Delta$ is a simplicial complex, and that
$\varnothing\in\Delta$ (since $\varnothing$ is a face of $\Delta$). Hence, it
is easy to see that
\[
C_{l}\left(  R\right)  \subseteq\Delta
\]
\footnote{\textit{Proof.} Let $U\in C_{l}\left(  R\right)  $. We shall prove
that $U\in\Delta$.
\par
If $U=\varnothing$, then this is obvious (since $\varnothing\in\Delta$). Thus,
we WLOG assume that $U\neq\varnothing$.
\par
{}We have
\[
U\in C_{l}\left(  R\right)  =\left\{  Y\text{-conic sets with respect to the
relation }R\right\}  .
\]
In other words, $U$ is a $Y$-conic set with respect to the relation $R$. In
other words, $U$ is a subset of $X$ and is empty or has a $Y$-neighbor (by the
definition of a $Y$-conic set). Since $U$ is not empty (because $U\neq
\varnothing$), we thus conclude that $U$ has a $Y$-neighbor. Let $y$ be this
$Y$-neighbor.
\par
Now, $y$ is a $Y$-neighbor of $U$. In other words, $y$ is an element of $Y$
such that all $u\in U$ satisfy $u\ R\ y$ (by the definition of a
$Y$-neighbor).
\par
Now, let $x\in U$ be arbitrary. Then, $x\ R\ y$ (since all $u\in U$ satisfy
$u\ R\ y$). In other words, $\left(  x,y\right)  \in R$. In other words, $x\in
f\left(  y\right)  $ (by the definition of the relation $R$).
\par
Forget that we fixed $x$. We thus have shown that $x\in f\left(  y\right)  $
for each $x\in U$. In other words, $U\subseteq f\left(  y\right)  $. But
$f\left(  y\right)  \in\Delta$. But $\Delta$ is a simplicial complex and
therefore closed under taking subsets. Hence, from $U\subseteq f\left(
y\right)  $ and $f\left(  y\right)  \in\Delta$, we obtain $U\in\Delta$ as
well.
\par
Forget that we fixed $U$. We thus have proved that $U\in\Delta$ for each $U\in
C_{l}\left(  R\right)  $. In other words, $C_{l}\left(  R\right)
\subseteq\Delta$. This completes our proof.} and%
\[
\Delta\subseteq C_{l}\left(  R\right)
\]
\footnote{\textit{Proof.} Let $U\in\Delta$. We shall show that $U\in
C_{l}\left(  R\right)  $.
\par
Define an element $y\in Y$ by $y=f^{-1}\left(  U\right)  $ (this is
well-defined, since $f:Y\rightarrow\Delta$ is a bijection). Then, $f\left(
y\right)  =U$. Hence, each $x\in U$ satisfies $x\in f\left(  y\right)  $
(since $x\in U=f\left(  y\right)  $) and therefore $\left(  x,y\right)  \in R$
(since the definition of the relation $R$ says that $\left(  x,y\right)  \in
R$ holds if and only if $x\in f\left(  y\right)  $) and thus $x\ R\ y$ (since
\textquotedblleft$x\ R\ y$\textquotedblright\ is just an alternative notation
for \textquotedblleft$\left(  x,y\right)  \in R$\textquotedblright). Renaming
the variable $x$ as $u$ in the preceding sentence, we can rewrite this as
follows: Each $u\in U$ satisfies $u\ R\ y$. Thus, $y$ is an element of $Y$
such that all $u\in U$ satisfy $u\ R\ y$. In other words, $y$ is a
$Y$-neighbor of $U$ (by the definition of a $Y$-neighbor). Therefore, the set
$U$ has a $Y$-neighbor (namely, $y$).
\par
Note that $U$ is a subset of $X$ (since $U\in\Delta$). Hence, $U$ is a subset
of $X$ and is empty or has a $Y$-neighbor (since $U$ has a $Y$-neighbor). In
other words, $U$ is a $Y$-conic set with respect to the relation $R$ (by the
definition of a $Y$-conic set). Thus,%
\[
U\in\left\{  Y\text{-conic sets with respect to the relation }R\right\}
=C_{l}\left(  R\right)  .
\]
\par
Forget that we fixed $U$. We thus have shown that $U\in C_{l}\left(  R\right)
$ for each $U\in\Delta$. In other words, $\Delta\subseteq C_{l}\left(
R\right)  $.}. Combining these two inclusions, we obtain $C_{l}\left(
R\right)  =\Delta$.
\end{verlong}

Next, define a relation $R^{\prime}$ from $X^{\prime}$ to $Y$ by
\[
R^{\prime}=\left\{  \left(  x^{\prime},y\right)  \in X^{\prime}\times Y\mid
x^{\prime}\in f^{\prime}\left(  y\right)  \right\}  .
\]
Thus, a pair $\left(  x^{\prime},y\right)  \in X^{\prime}\times Y$ belongs to
$R^{\prime}$ if and only if $x^{\prime}\in f^{\prime}\left(  y\right)  $. In
other words, $R^{\prime}$ is the containment relation between the vertices and
the faces of $\Delta^{\prime}$, except that the faces have been relabelled
using $f^{\prime}$.

We have already proved that $C_{l}\left(  R\right)  =\Delta$. An analogous
argument (using $X^{\prime}$, $\Delta^{\prime}$, $R^{\prime}$ and $f^{\prime}$
instead of $X$, $\Delta$, $R$ and $f$) shows that $C_{l}\left(  R^{\prime
}\right)  =\Delta^{\prime}$.

Next, we shall show that $C_{r}\left(  R\right)  =C_{r}\left(  R^{\prime
}\right)  $. Indeed, this follows easily from abstract nonsense: The pair
$\phi:=\left(  \alpha,\operatorname*{id}\nolimits_{Y}\right)  $ is an
isomorphism of relations from $R$ to $R^{\prime}$ (this follows easily from
the definitions of $R$ and $R^{\prime}$, since $f^{\prime}=\alpha_{\ast}\circ
f$ and since $\alpha$ is a bijection\footnote{In more detail: We need to show
that two elements $x\in X$ and $y\in Y$ satisfy $\left(  x,y\right)  \in R$ if
and only if they satisfy $\left(  \alpha\left(  x\right)  ,\operatorname*{id}%
\nolimits_{Y}\left(  y\right)  \right)  \in R^{\prime}$. But this follows from
the equivalences%
\begin{align*}
\left(  \left(  x,y\right)  \in R\right)  \   & \Longleftrightarrow\ \left(
x\in f\left(  y\right)  \right)  \qquad\left(  \text{by the definition of
}R\right)  \\
& \Longleftrightarrow\ \left(  \alpha\left(  x\right)  \in\alpha\left(
f\left(  y\right)  \right)  \right)  \qquad\left(  \text{since }\alpha\text{
is a bijection}\right)  \\
& \Longleftrightarrow\ \left(  \alpha\left(  x\right)  \in f^{\prime}\left(
y\right)  \right)  \qquad\left(  \text{since }\alpha\left(  f\left(  y\right)
\right)  =\alpha_{\ast}\left(  f\left(  y\right)  \right)  =f^{\prime}\left(
y\right)  \right)  \\
& \Longleftrightarrow\ \left(  \left(  \alpha\left(  x\right)  ,y\right)  \in
R^{\prime}\right)  \qquad\left(  \text{by the definition of }R^{\prime
}\right)  \\
& \Longleftrightarrow\ \left(  \left(  \alpha\left(  x\right)
,\operatorname*{id}\nolimits_{Y}\left(  y\right)  \right)  \in R^{\prime
}\right)  \qquad\left(  \text{since }y=\operatorname*{id}\nolimits_{Y}\left(
y\right)  \right)  .
\end{align*}
}). Hence, it induces an isomorphism of complexes $C_{r}\left(  \phi\right)
:C_{r}\left(  R\right)  \rightarrow C_{r}\left(  R^{\prime}\right)  $.
Recalling the definition of $C_{r}\left(  \phi\right)  $, we see that this
isomorphism $C_{r}\left(  \phi\right)  $ is simply the identity map (since it
replaces each vertex of $C_{r}\left(  R\right)  $ by its image under
$\operatorname*{id}\nolimits_{Y}$, which is exactly the same vertex). Thus,
the identity map is an isomorphism of complexes from $C_{r}\left(  R\right)  $
to $C_{r}\left(  R^{\prime}\right)  $. This shows that
\[
C_{r}\left(  R\right)  =C_{r}\left(  R^{\prime}\right)  .
\]

But $R$ is a bipartite relation (since the sets $X$ and $Y$ are disjoint).
Thus, Theorem \ref{thm.biclique.collapse} shows that its biclique complex
$B\left(  R\right)  $ collapses to both $C_{l}\left(  R\right)  $ and
$C_{r}\left(  R\right)  $. Therefore, the simplicial complexes $C_{l}\left(
R\right)  $ and $C_{r}\left(  R\right)  $ are simple-homotopy-equivalent. In
other words, $C_{l}\left(  R\right)  \sim C_{r}\left(  R\right)  $, where the
symbol $\sim$ means simple-homotopy-equivalence. Likewise, we obtain
$C_{l}\left(  R^{\prime}\right)  \sim C_{r}\left(  R^{\prime}\right)  $ (since
$R^{\prime}$ is also a bipartite relation). Since simple-homotopy-equivalence
is an equivalence relation, we can combine these results to%
\[
\Delta=C_{l}\left(  R\right)  \sim C_{r}\left(  R\right)  =C_{r}\left(
R^{\prime}\right)  \sim C_{l}\left(  R^{\prime}\right)  =\Delta^{\prime}.
\]
In other words, $\Delta$ and $\Delta^{\prime}$ are simple-homotopy-equivalent.
\end{proof}

Now we can easily obtain Theorem \ref{thm.barmak}:

\begin{proof}
[Proof of Theorem \ref{thm.barmak}.]If the sets $X$ and $Y$ are disjoint, then
Theorem \ref{thm.biclique.collapse} shows that the biclique complex $B$ of $R$
collapses to both $C_{X}$ and $C_{Y}$, and therefore the complexes $C_{X}$ and
$C_{Y}$ are simple-homotopy-equivalent. This proves Theorem \ref{thm.barmak}
in the case when the sets $X$ and $Y$ are disjoint.

Now let us consider the general case. We shall reduce this case to the
disjoint case by replacing $X$ and $Y$ with two disjoint copies.

Namely, define the two finite sets $\widetilde{X}=X\times\left\{  0\right\}  $
and $\widetilde{Y}=Y\times\left\{  1\right\}  $, which are clearly disjoint.
Let $\widetilde{R}$ be the subset of $\widetilde{X}\times\widetilde{Y}$
consisting of all pairs of the form $\left(  \left(  x,0\right)  ,\ \left(
y,1\right)  \right)  $ with $\left(  x,y\right)  \in R$. In other words,
$\widetilde{R}$ is the binary relation from $\widetilde{X}$ to $\widetilde{Y}$
for which two elements $\left(  x,0\right)  \in\widetilde{X}$ and $\left(
y,1\right)  \in\widetilde{Y}$ satisfy $\left(  x,0\right)  \ \widetilde{R}%
\ \left(  y,1\right)  $ if and only if $x\ R\ y$. Informally, this relation
$\widetilde{R}$ is simply \textquotedblleft the relation $R$ after each $x\in
X$ has been renamed as $\left(  x,0\right)  $ and each $y\in Y$ has been
renamed as $\left(  y,1\right)  $\textquotedblright. Rigorously, this means
that the relations $R$ and $\widetilde{R}$ are isomorphic; an explicit
isomorphism from $R$ to $\widetilde{R}$ is the pair $\phi:=\left(  \phi
_{l},\phi_{r}\right)  $, where $\phi_{l}:X\rightarrow\widetilde{X}$ is the map
sending each $x$ to $\left(  x,0\right)  $, and where $\phi_{r}:Y\rightarrow
\widetilde{Y}$ is the map sending each $y$ to $\left(  y,1\right)  $. This
isomorphism induces isomorphisms $C_{l}(\phi)\colon C_{l}(R)\rightarrow
C_{l}(\widetilde{R})$ and $C_{r}(\phi)\colon C_{r}(R)\rightarrow
C_{r}(\widetilde{R})$. Since isomorphic simplicial complexes are
simple-homotopy-equivalent (by Lemma \ref{lem.isomorphic.simple.homotopy.type}%
), this entails that $C_{l}(R)\sim C_{l}(\widetilde{R})$ and $C_{r}(R)\sim
C_{r}(\widetilde{R})$, where the symbol  $\sim$ means
simple-homotopy-equivalence. But the sets $\widetilde{X}$ and $\widetilde{Y}$
are disjoint, and thus $\widetilde{R}$ is a bipartite relation. Hence, we can
apply Theorem \ref{thm.barmak} to $\widetilde{X}$, $\widetilde{Y}$ and
$\widetilde{R}$ instead of $X$, $Y$ and $R$ (since we have already proved
Theorem \ref{thm.barmak} in the case when the sets $X$ and $Y$ are disjoint).
We conclude that $C_{l}(\widetilde{R})\sim C_{r}(\widetilde{R})$. Since
simple-homotopy-equivalence is an equivalence relation, we can now combine our
results:%
\[
C_{l}\left(  R\right)  \sim C_{l}(\widetilde{R})\sim C_{r}(\widetilde{R})\sim
C_{r}\left(  R\right)  .
\]
This proves Theorem \ref{thm.barmak} in the general case.
\end{proof}


\section{Further directions}

Dowker's paper \cite{Dowker52} has motivated much research in combinatorial
and applied topology over the 70 years since its publication (with 292
citations as of 2024).
Some of it generalizes and extends its results to other settings, or
approaches their proofs in different ways.
In this final section, we shall connect two such developments with our
Morse-theoretic approach.

\subsection{On the Brun--Salbu rectangle complex}

Brun and Salbu \cite{BruSal22} have recently given a new proof of
Theorem \ref{thm.dowker}, using projections instead of inclusions.
Their proof relies on the following notions (where $R$ is a bipartite
relation from $X$ to $Y$):

\begin{itemize}
\item A \emph{rectangle} of $R$ means a subset of $R$ having the form $U\times
V$ for some $U\subseteq X$ and some $V\subseteq Y$.

\item The \emph{rectangle complex} $E$ (called $E\left(  R\right)  $ in
\cite{BruSal22}) is defined to be the complex with ground set $X\times Y$ (not
$X\cup Y$) whose faces are the subsets of all rectangles of $R$.
\end{itemize}

Brun and Salbu show (\cite[Theorem 4.3]{BruSal22}) that this rectangle complex
$E$ is homotopy-equivalent to both $C_{X}$ and $C_{Y}$, but not via simplicial
embeddings like for $B$, but rather via simplicial projections. Namely, the
simplicial projection maps (defined by their action on the ground sets)%
\begin{align*}
\pi_{X}:E &  \rightarrow C_{X},\\
\left(  x,y\right)   &  \mapsto x
\end{align*}
and%
\begin{align*}
\pi_{Y}:E &  \rightarrow C_{Y},\\
\left(  x,y\right)   &  \mapsto y
\end{align*}
are shown to be homotopy equivalences (using Quillen's fiber lemma), so that
$C_{X}\simeq E\simeq C_{Y}$.

This result, too, can be proved using discrete Morse theory,
thus strengthening it to a simple-homotopy equivalence:

\begin{theorem}
\label{thm.E.she}The complexes $E$, $B$, $C_{X}$ and $C_{Y}$ are all
simple-homotopy-equivalent.
\end{theorem}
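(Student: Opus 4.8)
The plan is to reduce everything to the single statement $E\sim B$ (where $\sim$ denotes simple-homotopy-equivalence): indeed Theorem~\ref{thm.biclique.collapse} already gives $B\searrow C_{X}$ and $B\searrow C_{Y}$, so $B$, $C_{X}$ and $C_{Y}$ lie in one simple-homotopy class, and it remains only to place $E$ in it. Since the rectangle complex is defined for bipartite $R$, I would first assume $X\cap Y=\varnothing$, reducing the general case to this one by the renaming trick from the proof of Theorem~\ref{thm.barmak} together with Lemma~\ref{lem.isomorphic.simple.homotopy.type}.

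To compare $E$ (living on $X\times Y$) with $B$ (living on $X\cup Y$) I would introduce a mixed complex $\widehat{B}$ on the ground set $\tup{X\cup Y}\sqcup\tup{X\times Y}$, whose faces may contain both vertices and edges. For a set $F$ of vertices and edges, let $\operatorname{supp}\tup{F}\subseteq X\cup Y$ denote the union of $F\cap\tup{X\cup Y}$ with the two endpoints of every edge in $F$, and declare $F$ to be a face of $\widehat{B}$ iff $\operatorname{supp}\tup{F}\in B$. Since $\operatorname{supp}$ is monotone and $B$ is downward closed, $\widehat{B}$ is a simplicial complex; moreover its faces contained in $X\times Y$ are exactly the faces of $E$, and its faces contained in $X\cup Y$ are exactly the faces of $B$. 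Thus $E$ and $B$ are both subcomplexes of $\widehat{B}$, and it suffices to prove $\widehat{B}\searrow E$ and $\widehat{B}\searrow B$.

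The collapse $\widehat{B}\searrow B$ is the easy half and follows the pattern of Theorem~\ref{thm.biclique.collapse}. Fix a total order on $X\cup Y$, put $M:=\widehat{B}\setminus B$ (the faces using at least one edge), and for $F\in M$ let $f\tup{F}$ be the smallest endpoint of an edge of $F$; then toggle the vertex $f\tup{F}$. Because $f\tup{F}$ depends only on the edges of $F$, toggling this vertex changes neither $\operatorname{supp}\tup{F}$ nor $f\tup{F}$, so condition~\textbf{(C1)} of Lemma~\ref{lem.pairing2} holds; and passing to a subface only shrinks the edge set, whence condition~\textbf{(C2)} holds as well. Lemma~\ref{lem.pairing2} and Theorem~\ref{thm.dmt.collapse} then yield $\widehat{B}\searrow B$.

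The collapse $\widehat{B}\searrow E$ is the main obstacle. Here one must cancel every face that uses a vertex of $X\cup Y$, and the naive dual of the matching above does not suffice: one can pair a face with the smallest edge both of whose endpoints are standalone vertices, but such an edge exists only when the standalone vertices meet both $X$ and $Y$. Faces whose standalone vertices lie on a single side --- in particular the pure-vertex faces coming from $C_{X}$ and $C_{Y}$ --- are not reached, because a vertex, unlike an edge, does not canonically determine an edge to toggle against (the biclique condition forbids attaching an arbitrary incident edge). I would therefore build this matching in stages: first cancel the two-sided faces against their minimal spanned edge, and then cancel each remaining one-sided face by absorbing its smallest standalone vertex into a canonically chosen compatible edge. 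The genuine work --- and the step I expect to be the real difficulty --- is checking that these stages assemble into a single acyclic matching, verifying acyclicity directly from a global order rather than through the convenient criterion of Lemma~\ref{lem.pairing2}. Granting $\widehat{B}\searrow E$, we obtain $E\sim\widehat{B}\sim B\sim C_{X}\sim C_{Y}$, which is the assertion of Theorem~\ref{thm.E.she}.
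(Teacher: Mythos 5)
First, a point of calibration: the paper does not actually prove Theorem \ref{thm.E.she}. It only states it, remarking explicitly that the only proof the authors have found is \emph{not} elementary (it passes through face posets, barycentric subdivision, and simple-homotopy analogues of Quillen's fiber lemma and the nerve theorem) and is deferred to forthcoming work. So you are not being compared against an argument in the text; the question is whether your sketch itself constitutes a proof. The parts you do carry out are sound: reducing to $E\sim B$ is legitimate since Theorem \ref{thm.biclique.collapse} already ties $B$, $C_X$ and $C_Y$ together; your mixed complex $\widehat{B}$ is a genuine simplicial complex whose full subcomplexes on $X\times Y$ and on $X\cup Y$ are exactly $E$ and $B$; and the collapse $\widehat{B}\searrow B$ works as you describe, because toggling the smallest edge-endpoint as a standalone vertex changes neither the edge set nor the support, so conditions \textbf{(C1)} and \textbf{(C2)} of Lemma \ref{lem.pairing2} are immediate. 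That half is a nice observation.

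The gap is that the entire content of the theorem now lives in the other half, $\widehat{B}\searrow E$, and there you offer only a two-stage plan while conceding that you have not verified it assembles into an acyclic matching. This is not a routine verification left to the reader; it is the hard part, and your sketch does not yet determine a well-defined matching. Concretely: in your second stage the edge toggled against a one-sided face $F=S\cup T$ cannot be a function of $S$ alone, because whether $F\cup\set{e}$ is still a face of $\widehat{B}$ depends on the full support of $F$ (adding $e$ enlarges the support unless both endpoints of $e$ are already covered), yet the chosen edge must be invariant under the toggle, which can shrink $\operatorname{supp}\tup{T}$. The pure-vertex faces $S\in C_X$ with $T=\varnothing$ must all be matched upward with faces containing an edge, and those upward partners must be kept out of your first stage; the cover relations crossing between two-sided and one-sided faces are exactly where cycles can appear, and nothing in the sketch rules them out. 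Indeed it is not even clear a priori that $\widehat{B}$ collapses to $E$ (a strictly stronger statement than $\widehat{B}\sim E$). Until an explicit acyclic matching on $\widehat{B}\setminus E$ is produced and checked, the theorem is not proved --- and it is worth noting that finding such an elementary argument is precisely what the authors report they were unable to do.
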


However, the only proof of this theorem we have found so far
is neither as elementary nor as simple as the above proof of
Theorem \ref{thm.barmak}.
It relies on the equivalence between simplicial complexes and
posets (via the barycentric subdivision -- see
\cite[proof of Proposition 9.29]{Kozlov20}) and on
simple-homotopy analogues of Quillen's fiber lemma and
the nerve theorem (cf. \cite{Barmak11}).
We intend to elaborate on it in forthcoming work.

\subsection{The relative case}

While Theorem \ref{thm.dowker} is commonly known as Dowker's theorem
nowadays, Dowker actually proved a different (if related) result in
\cite[Theorems 1 and 1a]{Dowker52}. Indeed, his result is simultaneously
weaker (making only homological rather than homotopical statements)
and stronger than Theorem \ref{thm.dowker}.
It is stronger, as it concerns itself with a more general situation:
a ``pair of relations'' (a relation $R_1$ with a subrelation $R_2$,
or, to use our functorial language, an injective morphism of relations
$i : R_2 \to R_1$) rather than a single relation.
The claim is then an isomorphism of relative (co)homology groups
between the respective Dowker complexes.

\begin{question}
Can this generalized claim be proved using Morse-theoretical
methods?
\end{question}

This question is not quite straightforward, as
discrete Morse theory in relative homotopy is much less
well-trodden than in the absolute setting, and our
acyclic matching constructed implicitly in the proof of
Theorem \ref{thm.biclique.collapse} is not adapted to a
subrelation.

\subsection{Generalized Dowker Duality}

The idea of the Dowker nerve has been applied to concepts of relations between 
objects with more structure than discrete sets. 

For example in \cite{BrFoSa23}, the authors define a Dowker nerve for relations 
between categories, and prove a Dowker duality theorem for these.
The paper \cite{FerMin20} introduces the cylinder of a relation between partially
ordered sets.

Another direction is introduced in the paper \cite{FerMin20}.
Given a relation $R$ beween two sets $X$ and $Y$, we obtain a relation $D(R)$
between the left Dowker nerve $C_l(R) = C_X$ and the right Dowker nerve $C_r(R) = C_Y$
consisting of the pairs $(\sigma, \tau) \in C_l(R) \times C_r(R)$ of simplices such that
$\sigma \times \tau \subseteq R$. By \cite[Theorem 2.6]{FerMin20}, the cylinder
of $D(R)$ is simple-homotopy-equivalent to the face posets of $C_X$ and $C_Y$.
Since the barycentric subdivision of a simplicial complex is simple-homotopy-equivalent
to the complex itself, this implies that order complex of $D(R)$ is simple-homotopy-equivalent
to $C_X$ and $C_Y$.


\begin{question}
Can the categorical version of Dowker's theorem \cite{BrFoSa23}  be proved
using discrete Morse theory for simplicial sets?
\end{question}

This would require a discrete Morse theory to be introduced for simplicial sets
in the first place. Such theories have been proposed for \textbf{semi}simplicial sets
previously, including in Brown's original paper \cite{Brown92} that laid the
ground for discrete Morse theory.

\subsection*{Acknowledgments}

The second author would like to thank Dmitry Kozlov and Tom Roby for
helpful and interesting conversations.

\bigskip

\end{document}